\newtheorem{thm}{Theorem}[section]
\newtheorem{cor}[thm]{Corollary}
\newtheorem{lem}[thm]{Lemma}
\newtheorem{prop}[thm]{Proposition}
\theoremstyle{definition}
\newtheorem{defin}[thm]{Definition}
\theoremstyle{remark}
\newtheorem*{remark}{Remark}
\numberwithin{equation}{section}
\newcommand{\supp}{\operatorname{supp}}
\newcommand{\dif}{\,\mathrm{d}}
\newcommand{\charfun}{\ensuremath{\mathbbm 1}}
\DeclareMathOperator{\conv}{re}
\begin{document}
\title{Martingale  convergence Theorems for Tensor Splines}

\author[M. Passenbrunner]{Markus Passenbrunner}
\address{Institute of Analysis, Johannes Kepler University Linz, Austria, 4040 Linz, Alten\-berger Strasse 69}
\email{markus.passenbrunner@jku.at}

\keywords{Tensor product spline orthoprojectors, Almost
	everywhere convergence, Maximal functions, Radon-Nikod\'{y}m property,
Martingale techniques}
	\subjclass[2010]{41A15, 42B25, 46B22, 42C10, 60G48}

\begin{abstract}
	In this article we prove martingale type pointwise convergence theorems pertaining to tensor
	product splines defined on $d$-dimensional Euclidean space ($d$ is a positive
	integer),
	where conditional expectations are replaced by their corresponding tensor spline
	orthoprojectors.
	Versions of Doob's maximal inequality, the martingale convergence
	theorem and the characterization of the Radon-Nikod\'{y}m property of
	Banach spaces $X$ in terms of pointwise $X$-valued martingale convergence are obtained
	in this setting.
	Those assertions are in full analogy to their
	martingale counterparts and hold independently of filtration, spline
	degree, and 
	dimension~$d$.
\end{abstract}

\maketitle 

\allowdisplaybreaks
\section{Introduction}
In this article we prove pointwise convergence theorems pertaining to tensor
product splines defined on $d$-dimensional Euclidean space in the spirit of 
the known results for martingales. We begin by discussing the
situation for martingales and, subsequently, for one-dimensional splines. 
For martingales, we use \cite{Neveu1975} and \cite{DiestelUhl1977} as
references.
Let $(\Omega, (\mathscr F_n), \mathbb P)$ be a filtered probability space. A
sequence of integrable functions $(f_n)_{n\geq 1}$ is a \emph{martingale} if $\mathbb
E(f_{n+1} | \mathscr F_n) = f_n$ for any $n$, where we denote by $\mathbb
E(\cdot | \mathscr F_n)$ the conditional expectation operator with respect to
the $\sigma$-algebra $\mathscr F_n$. This operator is the orthoprojector onto
the space of $\mathscr F_n$-measurable $L^2$-functions and it 
can be extended to act on the Lebesgue-Bochner space
$L^1_X$ for  any Banach space $(X,\|\cdot\|)$.
Observe that if $f\in L^1_X$, the sequence  $(\mathbb E(f|\mathscr F_n))$
is a martingale.
In this case, we have that $\mathbb E(f|\mathscr F_n)$ converges
almost surely to $\mathbb E(f|\mathscr F)$ with $\mathscr F = \sigma(
\cup_n\mathscr F_n)$. A~crucial step in the proof of this
convergence theorem is 
\emph{Doob's maximal inequality}
		\begin{equation*}
			\mathbb P( \sup_n \|f_n\| > t ) \leq 
		\frac{\sup_n\|f_n\|_{L^1_X}}{ t}, \qquad t>0,
		\end{equation*}
which states that the martingale maximal function $\sup \|f_n\|$ is of weak type $(1,1)$.
For general scalar-valued martingales, we have the following convergence
theorem: 
 any martingale $(f_n)$ that is bounded in $L^1$ has
an almost sure limit function contained in $L^1$. This limit can be identified
as the Radon-Nikod\'{y}m derivative of the $\mathbb P$-absolutely continuous part 
of the measure
$\nu$ defined by
\begin{equation}\label{eq:martingalnu}
	\nu (A) = \lim_m \int_A f_m \dif\mathbb P,\qquad A\in \cup_n \mathscr F_n.
\end{equation}
This limit exists because of the martingale property of $(f_n)$.
The same convergence theorem as
above holds true 
for $L^1_X$-bounded $X$-valued martingales $(f_n)$, 
provided there exists a
Radon-Nikod\'{y}m derivative of the $\mathbb P$-absolutely continuous part
of the now $X$-valued measure $\nu$ in \eqref{eq:martingalnu}.
Banach spaces $X$ where this is always possible are said to have the
\emph{Radon-Nikod\'{y}m property} (RNP) (see Definition~\ref{def:rnp}).
The RNP of a Banach space is even characterized by martingale
convergence meaning that in any Banach space $X$ without RNP, we can find a
non-convergent and $L^1_X$-bounded martingale.

Consider now the special case where each $\sigma$-algebra $\mathscr F_n$ is generated by
a partition of a bounded interval $I\subset \mathbb R$ into finitely many 
subintervals $(I_{n,i})_i$ of positive length as atoms of $\mathscr F_n$. In this case, $(\mathscr F_n)$ is
called an \emph{interval filtration} on $I$. Then, the
characteristic functions $(\charfun_{I_{n,i}})$ of those atoms are a sharply localized
orthogonal basis of $L^2(\mathscr F_n)$ w.r.t. (with respect to) Lebesgue measure
$\lambda=|\cdot|$. 
If we want to preserve the localization property of the basis functions, but 
at the same time consider spaces of functions with higher smoothness, a natural
candidate is the space of piecewise polynomial functions of order $k$, given by
\begin{align*}
	S^{k}(\mathscr F_n) = \{f : I\to \mathbb R\ |\ &\text{$f$ is $k-2$ times continuously
	differentiable and} \\
	&\qquad\text{a polynomial of order $k$ on
	each atom of $\mathscr F_n$} \},
\end{align*}
where $k$ is an arbitrary positive integer.
One reason for this is that $S^k(\mathscr F_n)$ admits a special basis, the so
called 
B-spline basis $(N_{n,i})_i$, that consists of non-negative and localized
functions $N_{n,i}$. 
Here, the term ``localized'' means that the support of each function $N_{n,i}$
consists of at most $k$ neighbouring atoms of $\mathscr F_n$.
A second reason is that if $(\mathscr F_n)$ is an increasing sequence of
interval $\sigma$-algebras, then the sequence of corresponding spline spaces $S^k(\mathscr
F_n)$ is increasing as well. 
Note that the aforementioned properties of the B-spline functions $(N_{n,i})$ imply that they
do not form an orthogonal basis of $S^k(\mathscr F_n)$ for $k\geq 2$.
For more information on spline functions,
see e.g. \cite{Schumaker2007}.
Let $P_n^{k}$ be the orthogonal projection operator onto
$S^{k}(\mathscr F_n)$ with respect to the
$L^2$  inner product on $I$  equipped with the Lebesgue measure.  
Since the space $S^{1}(\mathscr F_n)$ consists of
piecewise constant functions,
 $P_n^{1}$ is the conditional expectation
operator with respect to the $\sigma$-algebra $\mathscr F_n$ and the Lebesgue
measure.
In general, the operator $P_n^{k}$ can be written in terms of the B-spline basis $(N_{n,i})$ as
\begin{equation}\label{eq:Pn}
	P_n^{k} f = \sum_i \int_I f N_{n,i}\dif\lambda \cdot N_{n,i}^*,
\end{equation}
where the functions $(N_{n,i}^*)$,
contained in the spline space $S^{k}(\mathscr F_n)$, are the biorthogonal (or
dual) system to the
B-spline basis $(N_{n,i})$. 
 Due to the uniform boundedness of the B-spline functions
 $N_{n,i}$, we are able to insert functions $f$ in formula
\eqref{eq:Pn} that are contained not only in $L^2$, but in the
Lebesgue-Bochner space $L^1_X$, thereby extending the operator
$P_n^{k}$ to $L^1_X$.

 Similarly to the definition of martingales, we adopt the following
 notion introduced in \cite{Passenbrunner2020}:
let $(f_n)_{n\geq 1}$ be a sequence of functions in the space $L^1_X$. We call this
sequence a \emph{$k$-martingale spline sequence} (adapted to $(\mathscr F_n))$
if
\[
	P_n^{k} f_{n+1} = f_n,\qquad n\geq 1.
\]
The local nature of the B-splines and the nestedness of the spaces
$(S^k(\mathscr F_n))_n$ ultimately allow us to 
 transfer the classical martingale theorems discussed above
	to
	$k$-martingale spline sequences adapted to  \emph{arbitrary}
	interval filtrations ($\mathscr F_n$) and for any positive integer $k$, just by replacing
	conditional expectation operators with the spline projection operators
	$P_n^{k}$. Indeed, for any positive integer $k$, we have the following results.
		\begin{enumerate}[(i)]
			\item\label{it:splines1} (Shadrin's theorem) \\
				There exists a constant $C$ (depending only on
				$k$ and not on $(\mathscr F_n)$) such that 
				\[
					\sup_n\| P_n^{k} : L^1_X \to L^1_X \| \leq C.
				\]
			\item \label{it:splines2}
					(Doob's inequality for
					splines) \\
				There exists a constant $C$
				such that for any $k$-martingale
				spline sequence
				$(f_n)$,
				\begin{equation*}
					|\{ \sup_n \|f_n\| > t \}| \leq C
				\frac{\sup_n\|f_n\|_{L^1_X}}{ t}, \qquad t>0.
				\end{equation*}

			\item\label{it:splines3}
				(Pointwise convergence of spline projections)\\
				For any Banach space $X$ and any $f\in L^1_X$,
				the sequence 
				$(P_n^{k} f)$ converges
				 almost everywhere to some $L^1_X$-function.
				 
				\item \label{it:splines4}
					(RNP characterization by pointwise spline
					convergence) \\
					For any Banach space $X$,
					the following statements are equivalent:
					\begin{enumerate}
						\item $X$ has RNP,
						\item every $k$-martingale
							spline sequence that is bounded
							in $L^1_X$ converges almost
							everywhere to an
							$L^1_X$-function.
					\end{enumerate}
		 \end{enumerate}
	We give a few comments
	regarding the proofs of the statements (i)--(iv) above.
	Property (i), for arbitrary $k$,
	was proved by A. Shadrin in the groundbreaking paper \cite{Shadrin2001}.  We also
	refer to the article \cite{Golitschek2014} by M. v.~Golitschek, who gave 
	a substantially shorter proof of \eqref{it:splines1}. It should be noted
	that in the case $k=1$, due to Jensen's inequality for conditional
	expectations, we can choose $C=1$ in \eqref{it:splines1}.
	Property \eqref{it:splines2} was proved in
	\cite{PassenbrunnerShadrin2014}. By a standard argument for passing from
	a weak type (1,1) inequality of a maximal function to a.e. convergence
	for $L^1_X$-functions (see for instance Chapter~1 of \cite{Garsia1970}), 
	item \eqref{it:splines3} was proved
	in \cite{PassenbrunnerShadrin2014} in the case that $\cup_n \mathscr
	F_n$ generates the Borel-$\sigma$-algebra on $I$ and in 
	 \cite{MuellerPassenbrunner2020} in general. We also identify the limit
	 of $P_n^k f$
	 as $P_\infty f$, where $P_\infty$ is (the $L^1_X$-extension of) the orthogonal
				 projector onto the
				 $L^2$-closure of $\cup_n S^{k}(\mathscr
				 F_n)$.
	The implication (a)$\implies$(b) in item \eqref{it:splines4} was also
	proved in \cite{MuellerPassenbrunner2020}, whereas the reverse
	implication (b)$\implies$(a) was shown in \cite{Passenbrunner2020} by constructing a
	non-convergent, $L^\infty_X$-bounded $k$-martingale spline sequence with values in Banach
	spaces $X$ without RNP for any positive integer $k$.

Almost everywhere convergence of orthogonal Franklin (i.e. the piecewise linear case) and spline series has 
	a long history:
	It was proved by Z. Ciesielski in \cite{Ciesielski1975}
	that  orthonormal spline expansions of $L^1$-functions with respect to 
	the dyadic partition on the interval 
	converge almost everywhere.  Z. Ciesielski and A. Kamont \cite{CiesielskiKamont1997}
	showed that Franklin series of integrable functions corresponding
	to arbitrary partitions converge almost everywhere for every possible partition.

	Let us also mention a few results in a slightly different direction. Note that in the 
	underlying manuscript, we consider the question under which conditions 
	martingale spline sequences $(f_n)$ converge almost everywhere to \emph{some} function 
	$f$.   As the $L^1$-bounded martingale $f_n = 2^n \charfun_{[0,2^{-n}]}$ on the unit interval shows, 
	the a.e. limit $f$ (which is zero in this case) 
	cannot be used to recover the sequence $(f_n)$. One can then ask for conditions so that 
	$(f_n)$ actually is (uniquely) determined by the limit function $f$.
Such conditions  were given for the piecewise linear dyadic case 
	by G.~G.~Gevorkyan \cite{Gevorkyan1989}. 
	There are many generalizations of this 
	result, see for instance M.~Pohosyan \cite{Poghosyan2000} or G.~G.~Gevorkyan, K.~A.~Navasardyan \cite{GevorkyanNavasardyan2018} for more general 
	partitions, G.~G.~Gevorkyan \cite{Gevorkyan2017multi} for the multivariate case,
	and K.~Keryan, A.~Khachatryan \cite{Keryan2023} for higher order splines.
	For more information regarding such so-called uniqueness results,
		 the interested reader should consult the references cited in the aforementioned articles.

	 In this article we are concerned with pointwise convergence of 
	 multivariate martingale spline sequences. 
	 Let $d$ be a positive integer and, for $j=1,\ldots,d$, let $(\mathscr
	 F_n^{j})$ be an interval filtration on the interval $I\subset \mathbb R$. Filtrations $(\mathscr
	 F_n)$ of the form $\mathscr F_n = \mathscr F_n^1\otimes \cdots \otimes
	 \mathscr F_n^d$ will be called an \emph{interval filtration} on the
	 cube $I^d$. Then, the atoms of $\mathscr F_n$ are of the form
	 $A_1\times \cdots\times A_d$ with atoms $A_j$ in $\mathscr F_n^j$.
	 For a tuple $k=(k_1,\ldots,k_d)$ consisting of $d$ positive integers,
	 denote by $P_n^{k}$ 
	 the orthogonal projector with respect to
	 $d$-dimensional Lebesgue measure $|\cdot|=\lambda^d$ onto the tensor product
	 spline space $S^{k_1}(\mathscr F_n^{1}) \otimes \cdots \otimes
	 S^{k_d}(\mathscr F_n^{d})$.
	 The tensor product structure of $P_n^{k}$ immediately allows us to
	 conclude (i) in this case, i.e., $P_n^{k}$ is bounded on
	 $L^1_X(I^d)$ by a constant depending only on $k$ (cf. also
	 \cite[Corollary~3.1]{PassenbrunnerProchno2019}).

	 Similarly to the one-dimensional case above, we then introduce the following notion:
	 \begin{defin} Let $(\mathscr F_n)$ be an interval filtration on
		 a $d$-dimensional cube $I^d$. A sequence of functions $(f_n)_{n\geq
	 1}$ in the space $L^1_X(I^d)$ is a
	 \emph{$k$-martingale spline sequence} (adapted to ($\mathscr F_n$)) if
	 \[
		P_n^k f_{n+1} = f_n, \qquad n\geq 1.
	 \]
	\end{defin}
	 The implication (b)$\implies$(a) in item (iv) for martingale spline
	 sequences on $I^d$ can easily be deduced from its
	 one-dimensional version as well. Indeed, for Banach spaces $X$ without
	 {\rm RNP} we
	 get, for any positive integer $k_1$, a non-convergent $X$-valued $k_1$-martingale
	 spline sequence
	 $(f_n^1)$ on $I$. Then, $f_n(x_1,\ldots,x_d) =
	 f_n^1(x_1)$ is a non-convergent $X$-valued $(k_1,\ldots,k_d)$-martingale spline
	 sequence on $I^d$ for
	 any choice of positive integers $k_2,\ldots,k_d$.

	 The main objective of this article is to prove the remaining assertions
	 \eqref{it:splines2}, \eqref{it:splines3} and the implication (a)$\implies$(b) in item \eqref{it:splines4}
	 for martingale spline sequences on $I^d$.
	The basic idea in the proof of \eqref{it:splines2} for $d=1$ (see
	\cite[Proposition~2.3]{PassenbrunnerShadrin2014}) 
	 is the pointwise bound
	\begin{equation}\label{eq:HLbound}
		\|P_n^k f(x)\| \leq C_k  \mathscr M_{\rm HL} f(x)
	\end{equation}
	of $P_n^k$ by the \emph{Hardy-Littlewood maximal function} 
	\begin{equation}\label{eq:HL}
		\mathscr M_{\rm HL}f(x) = \sup_{J\ni x} \frac{1}{|J|} \int_J
		\|f(y)\|\dif y,	
	\end{equation}
	where the supremum is taken over all intervals $J$ that contain the point $x$.
	This is enough to imply \eqref{it:splines2} for $d=1$ as it is a well
	known fact that $\mathscr M_{\rm HL}$ itself satisfies the weak type (1,1)
	bound
	\[
		| \{ \mathscr M_{\rm HL} f > t \}| \leq \frac{3}{t} \| f
		\|_{L^1_X},\qquad t>0.
	\]
	In dimensions $d>1$, by using this ad-hoc approach (see
	\cite[Proposition~3.3]{PassenbrunnerProchno2019}) one would need
	the \emph{strong maximal function} $\mathscr M_{\rm S} f(x)$ on the right hand side
	of \eqref{eq:HLbound}, where $\mathscr M_{\rm S}f(x)$ is defined by the same formula \eqref{eq:HL} as
	$\mathscr M_{\rm HL}f(x)$, but where
	the supremum is taken over all $d$-dimensional axis-parallel rectangles
	$J\subset I^d$ containing the point $x$.
	As a matter of fact, this is not enough to derive
	\eqref{it:splines2}, since the best possible weak type inequality for
	$\mathscr M_{\rm S}$ is true only in the Orlicz space $L(\log L)^{d-1}$ (see
	\cite{Guzman1973, Jessen1935, Saks1935}), which is a strict subset of $L^1$.

	Here we show how to employ the martingale spline structure,
	especially nestedness of atoms, to avoid the usage of the strong maximal
	function $\mathscr M_{\rm S}$ altogether 
	and replace it by an intrinsic maximal function that is (as we will
	show) of weak type $(1,1)$.
	This is crucial in the proof of the statements
	\eqref{it:splines2}, \eqref{it:splines3}, \eqref{it:splines4} 
	for any dimension $d$. Those statements are in full analogy to the martingale and
	one-dimensional spline results.
	The validity of \eqref{it:splines2} and \eqref{it:splines3} for martingale
	spline sequences on $I^d$ solves a problem stated in \cite{PassenbrunnerProchno2019}.
	
	The organization of this article is as follows.
	In Section~\ref{sec:prelims} we collect a few basic facts about vector
	measures needed in the sequel.
	In Section~\ref{sec:maximal}, we prove items \eqref{it:splines2} and
	\eqref{it:splines3} for martingale spline sequences on $I^d$
	(Proposition~\ref{prop:maximal} and
	Theorem~\ref{thm:aeconv}
	respectively).
	In Section~\ref{sec:convergence}, the implication
	(a)$\implies$(b) of item \eqref{it:splines4} is proved in this case
	(Theorem~\ref{thm:conv}) under the restriction that $\cup_n \mathscr
	F_n$ generates the Borel-$\sigma$-algebra on $I^d$.
	In Section~\ref{sec:convergence_general}, we show this assertion for
	general interval filtrations on $I^d$
	and give an explicit formula for the pointwise limit of martingale spline sequences.
\section{Preliminaries}
\label{sec:prelims}
We refer to the book \cite{DiestelUhl1977} by J. Diestel and J.J. Uhl
for basic facts on vector valued integration, martingales, vector measures and
the results that follow.

Let $\Omega$ be a set, $\mathscr A$ an algebra of subsets of $\Omega$ and
$(X,\|\cdot\|)$ a Banach space. A function $\nu:\mathscr A \to X$ is a \emph{(finitely
additive) vector measure} if, whenever $E_1,E_2\in\mathscr A$ are disjoint, we
have $\nu(E_1\cup E_2) = \nu(E_1) + \nu(E_2)$.
If, in addition, $\nu(\cup_{n=1}^\infty E_n) = \sum_{n=1}^\infty \nu(E_n)$ in the
norm topology of $X$ for all sequences $(E_n)$ of mutually disjoint members of
$\mathscr A$ such that $\cup_{n=1}^\infty E_n\in\mathscr A$, then $\nu$ is 
a \emph{countably additive vector measure}.
The \emph{variation} $|\nu|$ of a finitely additive vector measure $\nu$  is the set function
\begin{equation*}
	|\nu|(E) = \sup_\pi \sum_{A\in\pi} \|\nu(A)\|,
\end{equation*}
where the supremum is taken over all partitions $\pi$ of $E$ into a finite number
of mutually disjoint members of $ \mathscr A$.
If $\nu$ is a finitely additive vector measure, then the variation
$|\nu|$ is monotone and finitely additive.  
The measure $\nu$ is of \emph{bounded variation} if $|\nu|(\Omega) < \infty$.
If $\mu:\mathscr A \to [0,\infty)$ is a finitely additive measure and
	$\nu:\mathscr A\to X$ is
	a finitely additive vector measure, $\nu$ is \emph{$\mu$-continuous} if
	$\lim_{\mu(E)\to 0} \nu(E)=0$.
	If $\mu_1,\mu_2 : \mathscr A\to [0,\infty)$ are two finitely additive
		measures on $\mathscr A$, $\mu_1$ and $\mu_2$ are mutually
		\emph{singular} if for each $\varepsilon>0$ there exists a set
		$A\in\mathscr A$ so that
		\[
			\mu_1( A^c) + \mu_2(A) \leq \varepsilon.
		\]
		\begin{thm}[{Lebesgue decomposition of vector measures}]\label{thm:lebesgue}
Let $\mathscr A$ be an algebra of subsets of the set $\Omega$. Let $\nu :
\mathscr A\to X$ be a finitely additive vector measure of bounded variation. Let
$\mu :
\mathscr A\to [0,\infty)$ be a finitely additive measure.

	Then there exist unique finitely additive vector measures 
	of bounded variation $\nu_c,\nu_s$ so that
\begin{enumerate}
	\item $\nu = \nu_c + \nu_s$, $|\nu| = |\nu_c| + |\nu_s|$,
	\item $\nu_c$ is $\mu$-continuous,  
	\item $|\nu_s|$ and $\mu$ are mutually singular.
\end{enumerate}
\end{thm}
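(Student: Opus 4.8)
The plan is to reduce the problem to a scalar Lebesgue decomposition of the total variation $\lambda:=|\nu|$, and then to lift it to an $X$-valued statement by a diagonal argument that uses completeness of $X$. Note that $\lambda$ is a \emph{finite} nonnegative finitely additive measure on $\mathscr A$. First I would invoke the classical Lebesgue decomposition for finite nonnegative finitely additive measures to write $\lambda=\lambda_c+\lambda_s$, where $\lambda_c$ is $\mu$-continuous and $\lambda_s$ and $\mu$ are mutually singular (this scalar fact can be quoted, e.g.\ from \cite{DiestelUhl1977}, or established directly). By mutual singularity of $\lambda_s$ and $\mu$, for every $n$ there is $B_n\in\mathscr A$ with $\lambda_s(B_n^c)+\mu(B_n)<2^{-n}$; in particular $\mu(B_n)\to0$, and since $\lambda_c$ is $\mu$-continuous this forces $\lambda_c(B_n)\to0$ as well. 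Put $c_n:=\lambda_c(B_n)+\lambda_s(B_n^c)\to0$.

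Next I would define $\nu_s(E):=\lim_n\nu(E\cap B_n)$ and $\nu_c(E):=\nu(E)-\nu_s(E)=\lim_n\nu(E\cap B_n^c)$. The limit exists because $B_n\setminus B_m\subseteq B_n$ and $B_n\setminus B_m\subseteq B_m^c$ give $\lambda(B_n\setminus B_m)\le c_n+c_m$, hence $\|\nu(E\cap B_n)-\nu(E\cap B_m)\|\le\lambda(B_n\setminus B_m)+\lambda(B_m\setminus B_n)\le2(c_n+c_m)$, so the sequence is Cauchy in $X$. Finite additivity of $\nu_c,\nu_s$ follows from that of $\nu$ by passing to the limit. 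For any finite partition $\pi$ of a set $E$, $\sum_{F\in\pi}\|\nu_s(F)\|=\lim_n\sum_{F\in\pi}\|\nu(F\cap B_n)\|\le\limsup_n|\nu|(E\cap B_n)\le\limsup_n(\lambda_c(B_n)+\lambda_s(E))=\lambda_s(E)$, so $|\nu_s|\le\lambda_s$, and symmetrically $|\nu_c|\le\lambda_c$. Consequently $\nu_c,\nu_s$ are of bounded variation; from $|\nu_c|+|\nu_s|\le\lambda_c+\lambda_s=\lambda=|\nu|\le|\nu_c|+|\nu_s|$ we get $|\nu|=|\nu_c|+|\nu_s|$; the bound $\|\nu_c(E)\|\le\lambda_c(E)$ with $\mu$-continuity of $\lambda_c$ gives $\mu$-continuity of $\nu_c$; and $|\nu_s|(A^c)+\mu(A)\le\lambda_s(A^c)+\mu(A)$ gives mutual singularity of $|\nu_s|$ and $\mu$. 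This settles existence.

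For uniqueness, suppose $\nu=\tilde\nu_c+\tilde\nu_s$ is any decomposition with the stated properties. Since $\mu(B_n)\to0$ and $\tilde\nu_c$ is $\mu$-continuous, $\tilde\nu_c(E\cap B_n)\to0$, so $\nu_s(E)=\lim_n\nu(E\cap B_n)=\lim_n\tilde\nu_s(E\cap B_n)$; hence it suffices to show $\tilde\nu_s(E\cap B_n^c)\to0$. Fix $\varepsilon>0$ and (using $\mu$-continuity of $\lambda_c$) a threshold so small that $\mu(F)$ below it forces $\lambda_c(F)<\varepsilon$; then pick $C\in\mathscr A$ witnessing mutual singularity of $|\tilde\nu_s|$ and $\mu$, with $|\tilde\nu_s|(C^c)<\varepsilon$ and $\mu(C)$ below that threshold. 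Splitting $E\cap B_n^c$ according to $C$ and $C^c$, the $C^c$-part has norm $\le|\tilde\nu_s|(C^c)<\varepsilon$, while $\|\tilde\nu_s(E\cap B_n^c\cap C)\|\le|\tilde\nu_s|(B_n^c\cap C)\le\lambda(B_n^c\cap C)=\lambda_c(B_n^c\cap C)+\lambda_s(B_n^c\cap C)<\varepsilon+\lambda_s(B_n^c)\le\varepsilon+c_n$, since $\mu(B_n^c\cap C)\le\mu(C)$. Thus $\limsup_n\|\tilde\nu_s(E\cap B_n^c)\|\le2\varepsilon$ for every $\varepsilon>0$, so $\tilde\nu_s(E\cap B_n^c)\to0$, whence $\tilde\nu_s=\nu_s$ and $\tilde\nu_c=\nu_c$.

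I expect the main obstacle to be organizing the uniqueness step so that it uses only the hypotheses as stated — that $\nu_c$ is $\mu$-continuous and $|\nu_s|$ is $\mu$-singular, funneled through the single approximating sequence $(B_n)$ — rather than the true but less elementary equivalence between $\mu$-continuity of a vector measure of bounded variation and $\mu$-continuity of its variation. Once the scalar decomposition of $\lambda=|\nu|$ is in hand, the existence part is comparatively routine.
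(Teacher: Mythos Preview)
The paper does not give a proof of this statement; it merely quotes it from \cite[Theorem~9 on p.~31]{DiestelUhl1977}. Your argument is correct and self-contained: the reduction to the scalar decomposition of $\lambda=|\nu|$, the Cauchy-sequence construction of $\nu_s(E)=\lim_n\nu(E\cap B_n)$ via the approximate carriers $B_n$, and the sandwich $|\nu_c|+|\nu_s|\le\lambda_c+\lambda_s=|\nu|\le|\nu_c|+|\nu_s|$ all go through as written. The uniqueness step is also fine; note that it uses the hypothesis $|\nu|=|\tilde\nu_c|+|\tilde\nu_s|$ to get $|\tilde\nu_s|\le\lambda$, which is exactly what makes the bound on $\|\tilde\nu_s(E\cap B_n^c\cap C)\|$ work. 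This is essentially the standard proof one finds in Diestel--Uhl.
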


This theorem can be found in \cite[Theorem~9 on p.  31]{DiestelUhl1977}.
The following theorem is part of \cite[Theorem~2 on p. 27]{DiestelUhl1977} after
using \cite[Proposition 15 on p. 7]{DiestelUhl1977}.
\begin{thm}[Extension theorem]\label{thm:extension}
	Let $\mathscr A$ be an algebra of subsets of a set $\Omega$ and let
	$\mathscr F$ be the $\sigma$-algebra generated by $\mathscr A$.
	Let $\nu:\mathscr A\to X$ be a countably additive vector measure of
	bounded variation.

	Then, $\nu$ has a unique countably additive extension $\overline{\nu} :
	\mathscr F\to X$.
\end{thm}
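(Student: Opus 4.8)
The plan is to reduce the statement to the classical Hahn--Carath\'eodory extension theorem applied to the scalar measure $|\nu|$, and then to build $\overline\nu$ from $\nu$ by an $L^1$-density argument, using the pointwise control $\|\nu(\cdot)\|\le|\nu|(\cdot)$ throughout.

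First I would show that $|\nu|\colon\mathscr A\to[0,\infty)$ is \emph{countably} additive on $\mathscr A$ (it is automatically monotone, finitely additive, and finite since $\nu$ has bounded variation). If $(E_n)$ are disjoint members of $\mathscr A$ with $E=\bigcup_n E_n\in\mathscr A$, monotonicity and finite additivity give $|\nu|(E)\ge\sum_{n=1}^N|\nu|(E_n)$ for every $N$, hence $|\nu|(E)\ge\sum_n|\nu|(E_n)$. For the reverse inequality, take any finite $\mathscr A$-partition $\pi$ of $E$; for $A\in\pi$ the sets $A\cap E_n$ lie in $\mathscr A$, are disjoint, and have union $A$, so countable additivity of $\nu$ yields $\|\nu(A)\|\le\sum_n\|\nu(A\cap E_n)\|$. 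Summing over $A\in\pi$, interchanging the nonnegative sums, and using that $\{A\cap E_n:A\in\pi\}$ is a finite $\mathscr A$-partition of $E_n$ for each fixed $n$, we get $\sum_{A\in\pi}\|\nu(A)\|\le\sum_n|\nu|(E_n)$; taking the supremum over $\pi$ finishes this step. Now the classical extension theorem produces a finite countably additive measure $\lambda$ on $\mathscr F=\sigma(\mathscr A)$ with $\lambda|_{\mathscr A}=|\nu|$.

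Next I would define the extension. For $F\in\mathscr F$ pick $A_j\in\mathscr A$ with $\lambda(A_j\triangle F)\to0$; such approximants exist by the usual approximation property of the Carath\'eodory outer measure (equivalently: the $F\in\mathscr F$ admitting such an approximation form a $\sigma$-algebra containing $\mathscr A$). Since $\|\nu(B)\|\le|\nu|(B)=\lambda(B)$ for $B\in\mathscr A$ and $A_i\triangle A_j\subseteq(A_i\triangle F)\cup(F\triangle A_j)$, we get $\|\nu(A_i)-\nu(A_j)\|=\|\nu(A_i\setminus A_j)-\nu(A_j\setminus A_i)\|\le\lambda(A_i\triangle A_j)\to0$, so $(\nu(A_j))$ is Cauchy in the Banach space $X$ and converges; an interleaving argument shows the limit is independent of the chosen sequence, and we set $\overline\nu(F):=\lim_j\nu(A_j)$. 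Then $\overline\nu|_{\mathscr A}=\nu$; passing to the limit in $\|\nu(A_j)\|\le\lambda(A_j)$ (using $|\lambda(A_j)-\lambda(F)|\le\lambda(A_j\triangle F)\to0$) gives $\|\overline\nu(F)\|\le\lambda(F)$ for all $F$; and $\overline\nu$ is finitely additive by approximating two disjoint sets simultaneously. Countable additivity is then immediate: if $F=\bigsqcup_n F_n$ in $\mathscr F$, finite additivity gives $\overline\nu(F)-\sum_{n=1}^N\overline\nu(F_n)=\overline\nu\bigl(\bigcup_{n>N}F_n\bigr)$, whose norm is at most $\lambda\bigl(\bigcup_{n>N}F_n\bigr)\to0$ by countable additivity of $\lambda$; in particular $\overline\nu$ has bounded variation with $|\overline\nu|\le\lambda$.

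Finally, for uniqueness suppose $\mu_1,\mu_2\colon\mathscr F\to X$ are countably additive and agree with $\nu$ on $\mathscr A$. The family $\mathscr D=\{F\in\mathscr F:\mu_1(F)=\mu_2(F)\}$ contains $\Omega$, is closed under complements (as $\Omega\in\mathscr A$, so $\mu_i(F^c)=\mu_i(\Omega)-\mu_i(F)$) and under countable disjoint unions, hence is a Dynkin system containing the $\pi$-system $\mathscr A$; by Dynkin's $\pi$--$\lambda$ theorem $\mathscr D\supseteq\sigma(\mathscr A)=\mathscr F$, so $\mu_1=\mu_2$. I expect the only genuinely delicate point to be the countable additivity of $|\nu|$ on $\mathscr A$ together with the well-definedness of $\overline\nu$; everything after that is soft precisely because the scalar bound $\|\nu(\cdot)\|\le|\nu|(\cdot)$ transfers all the required properties of $\overline\nu$ from the scalar measure $\lambda$.
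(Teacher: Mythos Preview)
Your argument is correct. The paper does not supply its own proof of this statement; it merely cites Diestel--Uhl (Theorem~2 on p.~27, after invoking Proposition~15 on p.~7, the latter being the fact that $|\nu|$ is countably additive whenever $\nu$ is). Your self-contained route---extend the scalar variation $|\nu|$ to $\mathscr F$ by the Hahn--Carath\'eodory theorem, define $\overline\nu(F)$ as the $X$-limit of $\nu(A_j)$ for any $\mathscr A$-approximants $A_j$ of $F$ in $\lambda$-measure using the control $\|\nu(\cdot)\|\le|\nu|(\cdot)$, and get uniqueness from the $\pi$--$\lambda$ theorem---is exactly the standard one and matches the approach in the cited reference. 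The only place where a reader might want one more line is the finite-additivity step (replace approximants $A_j,B_j$ of disjoint $F,G$ by $A_j$ and $B_j\setminus A_j$, noting $\lambda(A_j\cap G)\le\lambda(A_j\triangle F)\to0$), but this is routine.
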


\begin{defin}[{\cite[Definition 3, p. 61]{DiestelUhl1977}}]\label{def:rnp}
		A Banach space $X$ admits the \emph{Radon-Nikod\'{y}m property
	(RNP)} if
		for every measurable space $(\Omega,\mathscr F)$, for every
		positive, finite, countably additive measure $\mu$ on $(\Omega,\mathscr F)$ and for every
		$\mu$-continuous, countably additive vector measure $\nu$ of bounded variation,
		there exists a function $f\in L^1_X(\Omega,\mathscr F,\mu)$ such
		that
		\begin{equation*}
			\nu(A) = \int_A f\dif\mu,\qquad A\in\mathscr F.
		\end{equation*}
	\end{defin}

\section{Maximal functions of Tensor spline projectors}
\label{sec:maximal}
Let $d$ be a positive integer and let $(\mathscr F_n)= (\mathscr F_n^1\otimes
\cdots\otimes \mathscr F_n^d)$ be an interval filtration
on $I^d$ for some interval $I=(a,b]$ with $a<b$ and $a,b\in \mathbb R$.
Each $\sigma$-algebra $\mathscr F_n$ is then generated by a finite, mutually
disjoint family 
$\{ I_{n,i} : i\in \Lambda \}$, $\Lambda\subset \mathbb Z^d$, of 
$d$-dimensional rectangles of the form $I_{n,i} = \prod_{\ell=1}^d
(a_\ell,b_\ell]$ for some $a\leq a_\ell < b_\ell\leq b$. 
We write $\mathcal A(\mathscr F_n) = \{ I_{n,i} : i\in\Lambda\}$ to denote
this collection of atoms of the $\sigma$-algebra $\mathscr F_n$.
We assume that
$\Lambda$ is of the form $\Lambda^1\times \cdots\times \Lambda^d$ where for each
$\ell=1,\ldots,d$, $\Lambda^\ell$ is a finite set of consecutive integers and the
rectangles $I_{n,i}$ have the property that they are ordered in the same way as
$\mathbb R^d$, i.e., if $i,j\in\Lambda$ with $i_\ell < j_\ell$ then the
projection of $I_{n,i}$ onto the $\ell$th coordinate axis lies to the left of the
projection of $I_{n,j}$ onto the $\ell$th coordinate axis.
For $x\in I^d$, let $A_n(x)$ be the uniquely determined atom (rectangle)  $A\in\mathscr
F_n$ so that $x\in A$. For two atoms $A,B\in \mathscr F_n$, define
$d_n(A,B) := |i-j|_1$ if $A=I_{n,i}$ and $B=I_{n,j}$
and where $|w|_1 = \sum_{\ell=1}^d
|w_\ell|$ denotes the $\ell^1$ norm of the vector $w$.
If $U=\cup_\ell A_\ell$ and $V=\cup_\ell B_\ell$ are (finite) unions of atoms in
$\mathscr F_n$, we set $d_n(U,V) =
\min_{\ell,m} d_n(A_\ell,B_m)$.
Additionally, for a non-negative integer $s$, define
$A_{n,s}(x)$ to be the union of all atoms $A$ in $\mathscr F_n$ with
$d_n(A,A_n(x))\leq s$. Moreover, for a Borel set $B\subset I^d$, let $A_{n,s}(B)
= \cup_{x\in B} A_{n,s}(x)$.

For each $\ell=1,\ldots,d$, let $k_\ell$ be a positive integer.
Define the tensor product spline space of order $k= (k_1,\ldots,k_d)$ associated to $\mathscr
F_n$ as
\[
	S_n := S^{k_1}(\mathscr F_n^{1}) \otimes \cdots \otimes
	S^{k_d}(\mathscr F_n^{d}). 
\]
The space $S_n$ admits the tensor product B-spline basis
$(N_{n,i})_{i}$ defined by
\[
	N_{n,i} = N_{n,i_1}^1\otimes \cdots \otimes N_{n,i_d}^d,
\]
where $(N_{n,i_\ell}^\ell)_{i_\ell}$ denotes the B-spline basis of
$S^{k_\ell}(\mathscr F_n^{\ell})$ that forms a partition of unity.
The support $E_{n,i}=\supp N_{n,i}$  of
$N_{n,i}$ is composed of at most $k_1\cdots k_d$ neighbouring atoms of $\mathscr
F_n$. 
Consider the orthogonal projection operator $P_n= P_n^{k}$ onto $S_n$ with respect to
$d$-dimensional Lebesgue measure $|\cdot|=\lambda^d$.
Using the B-spline basis and its biorthogonal system $(N_{n,i}^*)$, 
the orthogonal projector $P_n$ is given by
\begin{equation}\label{eq:rep_Pn}
	P_n f =\sum_i \int_{I^d} fN_{n,i}\dif\lambda^d \cdot  N_{n,i}^{*},\qquad
	f\in L^1_X(I^d).
\end{equation}
The dual B-spline functions $N_{n,i}^*$ admit the following crucial geometric decay estimate 
\begin{equation}	
	\label{eq:mainestimate}
	| N_{n,i}^*(x) | \leq C \frac{ q^{d_n(E_{n,i}, A_n(x))} }{ |\conv
		(E_{n,i}\cup
	A_n(x))|}, \qquad x\in I^d,
\end{equation}
for some constants $C$ and $q\in [0,1)$ that depend only on  $k$, where $\conv(S)$
denotes the smallest, axis-parallel rectangle containing the set $S$.
This inequality was shown in
\cite[Theorem~1.2]{PassenbrunnerShadrin2014} for $d=1$ and if $d>1$,
\eqref{eq:mainestimate} is a consequence of the fact that $N_{n,i}^*$ is the
tensor product of one-dimensional dual B-spline functions.
Inserting this estimate in formula \eqref{eq:rep_Pn} for $P_n f$ and as $E_{n,i}$ 
consists of
at most $k_1\cdots k_d$ neighbouring atoms of $\mathscr F_n$, setting $C_k :=
C(k_1\cdots k_d) q^{-|k|_1}$, we get the
pointwise estimate
\begin{equation}
	\label{eq:estPn}
	\|P_n f(x)\| \leq C_k \sum_{A \in \mathcal A(\mathscr F_n)} b_n(q,
	\|f\|\dif\lambda^d, A, x),\qquad f\in L^1_X(I^d)
\end{equation}
introducing the expression
\begin{equation}\label{eq:defbn}
	b_n(q,\theta,A,x) = 
	\frac{q^{d_n(A,A_n(x))}}{|\conv(A \cup A_n(x))|}\theta(A),\qquad A\in\mathcal A(\mathscr F_n),  x\in I^d
\end{equation}
for a positive,  finitely additive measure $\theta$ on the algebra $\mathscr A =
\cup_m\mathscr F_m$.  
In view of inequality \eqref{eq:estPn}, it suffices to consider, instead of the maximal 
function of the projection operators $P_n$, 
the maximal functions given by
\begin{equation}\label{eq:max_fct}
	\mathscr M_{K} \theta(x) = \sup_{n\geq K} \sum_{A\in\mathcal A(\mathscr F_n)}
	b_n( q, \theta, A, x),
	\qquad x\in I^d
\end{equation}
for any positive integer $K$ and some fixed parameter $q\in[0,1)$.

If we abbreviate by $\mathscr Mf$ the maximal function $\mathscr M_1
(|f|\dif\lambda^d)$, we have the following weak type (1,1) result. 
\begin{prop}\label{prop:maximal}
	The maximal function $\mathscr M$ is of weak type
	(1,1), i.e. there exists a constant $C$ depending only on the dimension
	$d$ and on the parameter $q\in[0,1)$, so that we
	have the inequality
	\[
		| \{ \mathscr M f > t \}| \leq  \frac{C}{t}\|
			f\|_{L^1},\qquad t>0,\ f\in L^1(I^d).
	\]
\end{prop}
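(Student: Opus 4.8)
The plan is to prove the weak type $(1,1)$ bound for $\mathscr M$ by a Calderón–Zygmund-type stopping time / covering argument that exploits the \emph{nestedness} of the atoms $A_n(x)$ along the filtration, thereby completely avoiding the strong maximal function. Fix $f\in L^1(I^d)$, $t>0$, and set $\theta = |f|\dif\lambda^d$. The starting point is the pointwise inequality
\[
	\sum_{A\text{ atom of }\mathscr F_n} b_n(q,\theta,A,x)
	= \sum_{A} \frac{q^{d_n(A,A_n(x))}}{|\conv(A\cup A_n(x))|}\,\theta(A).
\]
The first step is to reorganize this sum by grouping atoms $A$ according to their $d_n$-distance $s = d_n(A,A_n(x))$ from $A_n(x)$: writing $A_{n,s}(x)$ for the union of all atoms at distance $\le s$, the inner sum is controlled by
\[
	\sum_{s\ge 0} \frac{q^{s}}{|A_{n,s}(x)|}\,\theta(A_{n,s}(x)),
\]
because $|\conv(A\cup A_n(x))| \gtrsim_d |A_{n,s}(x)| / \text{(number of atoms at distance exactly }s)$ and the polynomial factor in the number of such atoms (at most $\sim s^{d-1}$) is absorbed by shrinking $q$ slightly to some $q'\in(q,1)$. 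Hence, up to a dimensional constant,
\[
	\mathscr M\theta(x) \le C_d \sup_{n\ge 1}\sum_{s\ge 0} (q')^{s}\,
	\frac{\theta(A_{n,s}(x))}{|A_{n,s}(x)|}.
\]

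The second step is the key geometric observation: the family $\{A_{n,s}(x) : n\ge 1,\ s\ge 0\}$ consists of finite unions of atoms, and along the sequence $n=1,2,\dots$ the atoms refine, so each $A_{n,s}(x)$ is a union of relatively few ``dyadic-like'' atoms with bounded overlap in a sense made precise by the $\ell^1$-metric $d_n$. I would set up a stopping time: for each fixed $s$, let $\Omega_s$ be the set where $\sup_n \theta(A_{n,s}(x))/|A_{n,s}(x)| > t/(C_d'\cdot (q')^{-s}\cdot\text{something summable})$, and extract from $\{A_{n,s}(x)\}$ a disjointified subfamily of sets $R_j$ (maximal ones in the refinement order) covering $\Omega_s$, with $\sum_j |R_j| \le C\,\theta(I^d)/(\text{the threshold})$ by the usual argument that $\theta(R_j) \ge (\text{threshold})|R_j|$ on each selected set. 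Because each $R_j$ is comparable (within a factor depending only on $d$ and $s$, polynomially in $s$) to a single atom by the bounded-eccentricity structure, and because the selected sets for a fixed $n$ are pairwise disjoint as unions of atoms, the covering constant stays under control. Summing the measures of the exceptional sets over $s$ against the weights $(q')^s$ — distributing $t$ as $t = \sum_s c_s t$ with $\sum_s c_s = 1$, $c_s$ chosen to beat the polynomial-in-$s$ covering losses while keeping $\sum_s (q')^s/c_s < \infty$, e.g. $c_s \sim (1-q'')(q'')^s$ for $q'<q''<1$ — yields $|\{\mathscr M f > t\}| \le C\,\|f\|_{L^1}/t$ with $C = C(d,q)$.

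The main obstacle I anticipate is controlling the covering/overlap constant when passing from the unions of atoms $A_{n,s}(x)$ to a disjoint selected family, uniformly in $s$ and $n$. In one dimension ($d=1$) the sets $A_{n,s}(x)$ are genuine intervals and the Vitali covering argument is immediate; in dimension $d$, $A_{n,s}(x)$ is an $\ell^1$-ball of atoms, and one must check that (i) its Lebesgue measure is comparable to that of a typical constituent atom times a factor polynomial in $s$, and (ii) maximal such sets at a fixed level $n$ that meet a common point can be disjointified with only a dimension-dependent loss — this is where the product/nested structure $\mathscr F_n = \mathscr F_n^1\otimes\cdots\otimes\mathscr F_n^d$ is essential, since it lets one reduce to one coordinate at a time. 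Once these geometric facts are in hand, the summation over $s$ with a geometric weight handles the rest, and no $L(\log L)^{d-1}$ obstruction appears because the decay $q^{d_n(\cdot,\cdot)}$ is summed \emph{before} taking the supremum over rectangles, not after.
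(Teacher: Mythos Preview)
Your first step --- the pointwise reorganization
\[
\sum_{A\text{ atom of }\mathscr F_n} b_n(q,\theta,A,x)\ \lesssim\ \sum_{s\ge 0}(q')^{s}\,\frac{\theta(A_{n,s}(x))}{|A_{n,s}(x)|}
\]
via the claimed bound $|\conv(A\cup A_n(x))|\gtrsim_d |A_{n,s}(x)|/s^{d-1}$ --- is false for general interval filtrations. Take $d=2$, let $A_n(x)$ be the unit square, let the $s$ atoms to its right each have width $\varepsilon$ and height $1$, and let the $s$ atoms above it each have width $1$ and height $H$. For $A$ the atom at grid offset $(s,0)$ one has $|\conv(A\cup A_n(x))|\approx 1$, whereas the vertical column alone forces $|A_{n,s}(x)|\gtrsim sH$; since $s^{d-1}=s$, your inequality would require $1\gtrsim H$, which fails for $H$ large. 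The same example kills the later assumption that $|A_{n,s}(x)|$ is comparable to a ``typical constituent atom'' times a polynomial in $s$: atoms at the same level $n$ can have arbitrarily disparate measures, so there is no bounded-eccentricity structure to exploit. In short, replacing the individual denominators $|\conv(A\cup A_n(x))|$ by a common $|A_{n,s}(x)|$ throws away exactly the anisotropic information that makes the problem nontrivial.

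The paper avoids this by never aggregating over distance shells. Writing $q=\rho^2$, one has $\sum_A b_n(q,\theta,A,x)\le c\cdot\max_A b_n(\rho,\theta,A,x)$ with $c=(2/(1-\rho))^d$, so if $x$ lies in the superlevel set there is a \emph{single} witness atom $F_x$ and a minimal level $n_x$ with $\rho^{d_{n_x}(F_x,A_{n_x}(x))}\theta(F_x)>(t/c)\,|\conv(F_x\cup A_{n_x}(x))|\ge (t/c)\,|A_{n_x}(x)|$. The stopping time is run on the genuinely nested atoms $A_{n_x}(x)$, yielding a disjoint cover indexed by $\Gamma$; the sum $\sum_{x\in\Gamma}|A_{n_x}(x)|$ is then split according to the \emph{direction vector} $m\in\mathbb Z^d$ between $A_{n_x}(x)$ and $F_x$. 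The key combinatorial lemma --- proved using minimality of $n_x$ and the inclusion $\conv(F_x\cup A_{n_x}(x))\subseteq\conv(F_y\cup A_{n_y}(y))$ when $F_x\subseteq F_y$ and the offsets agree --- is that for each fixed $m$ the witnesses $\{F_x:x\in\Gamma_m\}$ are pairwise disjoint, whence $\sum_{x\in\Gamma_m}\theta(F_x)\le\theta(I^d)$. Summing over $m$ with weight $\rho^{|m|_1}$ gives the result. The selection of a single witness per point, rather than an averaged neighbourhood, is precisely what makes the anisotropy harmless.
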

\begin{proof}
	Set $B=I^d$, $K=1$ and $\theta = |f|\dif\lambda^d$ in
	Theorem~\ref{thm:main_inequality} below and observe
	that the geometric series in equation \eqref{eq:main_inequality}
	converges.
\end{proof}

The following result about the maximal operators $\mathscr M_K$ is the focal
point in our investigations.
\begin{thm}\label{thm:main_inequality}
Let $(\mathscr F_n)$ be an interval filtration on $I^d$ and let $\theta$ be a
non-negative, finitely additive measure on the algebra $\mathscr A = \cup_n
\mathscr F_n$.

Then, for any Borel set $B\subset I^d$ and any positive integer $K$,
\begin{equation}\label{eq:main_inequality}
	| B \cap \{\mathscr M_{K}\theta > t\} |\leq \frac{C}{t} \cdot \sum_{s=0}^\infty
	q^{s/2} (s+1)^{d-1} \theta\big( A_{K,s}(B) \big),\qquad t>0
\end{equation}
for some constant $C$ depending only on $q$ and $d$.
\end{thm}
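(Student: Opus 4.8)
The plan is to run a single Calder\'on--Zygmund stopping-time argument directly on the intrinsic maximal function $\mathscr M_K\theta$, using the nestedness of the atoms of $(\mathscr F_n)$ to keep everything scale-free. First I would record the two elementary pointwise facts that power the estimate: for an atom $A$ of $\mathscr F_n$ and $x\in I^d$ one has $|A_n(x)|\le|\conv(A\cup A_n(x))|$ (because $A_n(x)\subseteq\conv(A\cup A_n(x))$), and the number of atoms $A$ of $\mathscr F_n$ with $d_n(A,A_n(x))=s$ is the size of a lattice sphere of radius $s$ in $\mathbb Z^d$, hence at most $C_d(s+1)^{d-1}$. Consequently, for fixed $x$ and $n\ge K$,
\[
\sum_{A\text{ atom of }\mathscr F_n}b_n(q,\theta,A,x)\le\frac1{|A_n(x)|}\sum_{A\text{ atom of }\mathscr F_n}q^{d_n(A,A_n(x))}\theta(A)=\frac1{|A_n(x)|}\sum_{s\ge0}q^{s}\,\theta\big(\Sigma_n^{s}(x)\big),
\]
where $\Sigma_n^{s}(x):=A_{n,s}(x)\setminus A_{n,s-1}(x)$ is the $s$-th index-shell of $A_n(x)$ (with $A_{n,-1}:=\emptyset$), and the last equality is finite additivity of $\theta$ together with the fact that the atoms at index-distance exactly $s$ from $A_n(x)$ partition $\Sigma_n^s(x)$. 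This reduces the theorem to a weak-type bound for a ``dyadic-type'' averaged maximal operator twisted by the geometrically weighted shells.

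Next I would perform the stopping time. Fix $t>0$, set $G:=B\cap\{\mathscr M_K\theta>t\}$, and for $x\in G$ let $n(x)$ be the smallest $n\ge K$ with $\sum_{A}b_n(q,\theta,A,x)>t$; put $Q_x:=A_{n(x)}(x)$. Since $x\mapsto\sum_A b_n(q,\theta,A,x)$ is constant on atoms of $\mathscr F_n$, the minimality of $n(x)$ together with the nestedness of $(\mathscr F_n)$ forces $\{Q_x:x\in G\}$ to be a family of pairwise disjoint atoms $Q_1,Q_2,\dots$ at levels $n_1,n_2,\dots$ with $G\subseteq\bigcup_jQ_j$, so that $|G|=\sum_j|Q_j|$. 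Applying the display above on each $Q_j$ gives $|Q_j|\le t^{-1}\sum_{s\ge0}q^{s}\theta(\Sigma_j^{s})$, where $\Sigma_j^{s}$ is the $s$-th index-shell of $Q_j$ at level $n_j$. Two further remarks: each $\Sigma_j^{s}$ is a union of at most $C_d(s+1)^{d-1}$ atoms of $\mathscr F_{n_j}$; and, because passing to the coarser $\mathscr F_K$ can only decrease index distances, the $\mathscr F_K$-atom containing a given atom of $\Sigma_j^{s}$ lies within $d_K$-distance $s$ of $A_K(x)$ for any $x\in Q_j\cap B$, whence $\Sigma_j^{s}\subseteq A_{K,s}(B)$. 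Summing over $j$ and interchanging the two sums,
\[
|G|\le\frac1t\sum_{s\ge0}q^{s}\sum_{j}\theta(\Sigma_j^{s}).
\]

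The remaining, and essential, step is the bound $\sum_j\theta(\Sigma_j^{s})\le C_d(s+1)^{d-1}\theta(A_{K,s}(B))$. One cannot deduce this from a bounded-overlap property of the shells $\Sigma_j^{s}$: a tower of ever-finer atoms shows that a single point can belong to infinitely many of them, and in fact the total mass $\sum_j\theta(\Sigma_j^{s})$ can exceed $\theta(A_{K,s}(B))$ by an arbitrarily large filtration-dependent factor if one ignores the stopping rule. The point — and this is precisely where the martingale structure replaces the strong maximal function $\mathscr M_{\rm S}$ — is that the overcount must be controlled \emph{scale by scale} using that $n_j$ is the \emph{first} crossing level, so $\sum_A b_{n_j-1}(q,\theta,A,\cdot)\le t$ on the $\mathscr F_{n_j-1}$-parent of $Q_j$: a region where $\theta$ is heavy relative to volume would already have triggered a stop at a coarser scale. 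Combining this with the disjointness of same-level stopping atoms (so that, at any fixed level, the footprints of the near-$y$ stopping atoms are distinct atoms drawn from a lattice sphere) and with the coarsening monotonicity, the $\theta$-mass that a fixed atom receives from the shells $\Sigma_j^{s}$ across all scales organizes into a geometric-type series in the scale gaps; the factor $q^{s/2}$ in the conclusion — as opposed to the $q^{s}$ produced by the last display — leaves exactly the room needed to absorb the combinatorial losses in this step. I expect this scale-free overcounting estimate to be the main obstacle. Once it is available, inserting it into the last display and using $q^{s}\le q^{s/2}$ yields
\[
|B\cap\{\mathscr M_K\theta>t\}|\le\frac{C_d}{t}\sum_{s\ge0}q^{s/2}(s+1)^{d-1}\theta(A_{K,s}(B)),
\]
which is \eqref{eq:main_inequality}; the constant depends only on $q$ and $d$, as required.
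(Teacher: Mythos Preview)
Your proposal has a genuine gap at exactly the place you flag as ``the main obstacle'': the bound $\sum_j\theta(\Sigma_j^{s})\le C_d(s+1)^{d-1}\theta(A_{K,s}(B))$ is never established, and the heuristic you offer does not prove it. Decomposing each shell by displacement vector, $\Sigma_j^{s}=\bigsqcup_{|m|_1=s}R_{j,m}$ with $R_{j,m}$ the $\mathscr F_{n_j}$-atom at displacement $m$ from $Q_j$, what you need is that for each fixed $m$ the family $\{R_{j,m}\}_j$ is pairwise disjoint. Your stopping rule (the \emph{full sum} $\sum_A b_n(q,\theta,A,\cdot)>t$) does not give this: if $R_{j',m}\subseteq R_{j,m}$ with $n_j<n_{j'}$, there is no reason for the $\mathscr F_{n_j}$-atom containing $Q_{j'}$ to coincide with $Q_j$, so the level-$n_j$ sum at the center of $Q_{j'}$ need not exceed $t$, and no contradiction with the minimality of $n_{j'}$ arises. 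The appeal to $\sum_A b_{n_j-1}\le t$ on the parent of $Q_j$ controls nothing about the overlap of the $R_{j,m}$ across scales, and the ``geometric-type series in the scale gaps'' is not made precise.

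The paper closes this gap by changing the stopping criterion \emph{before} the covering argument. Setting $\rho=q^{1/2}$ and $c=\big(2/(1-\rho)\big)^d$, a pigeonhole step shows that whenever $\sum_A b_n(q,\theta,A,x)>t$ there exists a \emph{single} atom $F$ with $b_n(\rho,\theta,F,x)>t/c$. One then defines $n_x$ as the minimal $n\ge K$ for which such a single-atom inequality holds and records a witness $F_x$. After extracting a disjoint cover $\{A_{n_x}(x):x\in\Gamma\}$ as you do, one groups $\Gamma$ by the displacement vector $m$ between $A_{n_x}(x)$ and $F_x$. The single-atom criterion is exactly what makes the disjointness of $\{F_x:x\in\Gamma_m\}$ provable: if $F_x\subseteq F_y$ with $n_x\ge n_y$, then $b_{n_y}(\rho,\theta,F_y,x)\ge b_{n_y}(\rho,\theta,F_y,y)>t/c$ (same displacement $m$ forces the relevant convex hulls and index-distances to compare correctly), contradicting the minimality of $n_x$ unless $n_x=n_y$, in which case disjointness of the stopping atoms forces $F_x\cap F_y=\emptyset$. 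This yields $\sum_{x\in\Gamma_m}\theta(F_x)\le\theta(A_{K,|m|_1}(B))$ directly; summing over $|m|_1=s$ produces the factor $(s+1)^{d-1}$, and the passage from $q$ to $\rho=q^{1/2}$ in the pigeonhole step is precisely the origin of the $q^{s/2}$ in \eqref{eq:main_inequality}. The missing idea in your argument is this trade of the sum for a single witnessing atom before stopping.
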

\begin{proof}
	Set $G_t = B \cap \{ \mathscr M_{K} \theta > t \}$ and let 
	$x\in G_t$. 
Then, there exists an
	index $n\geq K$ so that 
	\[
	\sum_{A\in\mathcal A(\mathscr F_n)} b_n(q,\theta,A,x) > t. \] 
	Letting $c=(2
	\sum_{\ell=0}^\infty \rho^\ell)^d = \big(2/(1-\rho)\big)^d<\infty$ with
	$\rho = q^{1/2}$, we obtain that there exists at
	least one atom
	$F$ of the $\sigma$-algebra $\mathscr F_{n}$ so that
	\begin{equation}\label{eq:lambda1}
			b_n(\rho,\theta, F, x)> t/c.
	\end{equation}
	Therefore, for $x\in G_t$, we choose $n_x<\infty$ to be the minimal index
	$n\geq K$ so that
	there exists an atom $F$ of $\mathscr F_{n}$ satisfying inequality
	\eqref{eq:lambda1}. We choose a particular 
	atom $F$ of $\mathscr F_{n_x}$
	with this property which will be denoted by $F_x$.
	The collection of atoms
		$\{ A_{n_x}(x) : x\in G_t \}$
	is nested and covers the set $G_t$. Thus, 
	it is possible to choose a countable subset $\Gamma\subset G_t$	
	such that the corresponding 
	collection 
		$\{ A_{n_x}(x) : x\in \Gamma\}$
		consists only of maximal and mutually disjoint atoms, 
		still covering the set $G_t$.
	Perform
	the following estimate using inequality \eqref{eq:lambda1}:
	\begin{equation}
		\label{eq:est1}
	\begin{aligned}
		|G_t| &\leq \sum_{x\in \Gamma} |A_{n_x}(x)| \leq
		\frac{c}{t} \cdot \sum_{x\in\Gamma} \rho^{d_{n_x}(F_x,
		A_{n_x}(x))} \theta(F_x) \\
		&= \frac{c}{t}\cdot  \sum_{s \geq 0}\rho^s \sum_{m\in\mathbb
		Z^d : |m|_1 = s}
		\Big( \sum_{x\in \Gamma_m} \theta(F_x) \Big),
	\end{aligned}	
	\end{equation}
	where for  $m\in\mathbb Z^d$, $\Gamma_m$ is the set of all
	$x\in \Gamma$ so that, if $A_{n_x}(x) = I_{n_x,i}$ and $F_x= I_{n_x,j}$
	for some $i,j\in\mathbb Z^d$, we have $i- j =m$.

	Next, we show that for each $m\in\mathbb Z^d$, the collection 
	$\{ F_x : x\in \Gamma_m\}$ consists of mutually disjoint sets.
	Assume the contrary, i.e. for some $m\in \mathbb Z^d$ there exist
two points $x,y\in \Gamma_m$ that are different from each other with $F_x\cap
	F_y\neq \emptyset$. 
	For definiteness, assume that $n_x\geq
	n_y$, and thus the nestedness of the
	$\sigma$-algebras $(\mathscr F_n)$ implies $F_x\subseteq F_y$.
	Assume that $i,i',j,j'\in\mathbb Z^d$ are such that
	\[
		I_{n_x,i} = A_{n_x}(x),\quad I_{n_y,i'} = A_{n_y}(y), \quad
		I_{n_x,j} = F_x,\quad I_{n_y,j'} = F_y.
	\]
	Since $x,y\in \Gamma_m$, we know that $i-j = m = i' - j'$.
	Therefore, since $\mathscr F_{n_x}$ is finer than $\mathscr F_{n_y}$ and
	by the inclusion $F_x \subseteq F_y$, we have
	\begin{equation}
		\label{eq:conv}
		\conv (F_x \cup A_{n_x}(x))\subseteq 
		\conv(F_y\cup
		A_{n_y}(x)) \subseteq 
		\conv (F_y \cup
		A_{n_y}(y)).
	\end{equation}
	Moreover, this and the definition of the distance $d_{n_y}$ implies
	\begin{equation}\label{eq:geometric}
		d_{n_y}(F_y, A_{n_y}(y)) \geq d_{n_y}(F_y,A_{n_y}(x)).
	\end{equation}
	Combining \eqref{eq:conv} and \eqref{eq:geometric} yields
	$b_{n_y}(\rho,\theta,
	F_y, x) \geq b_{n_y}(\rho,\theta,F_y,y)$; additionally, by definition of $n_y,
	F_y$  we have the inequality
	$b_{n_y}(\rho,\theta,F_y,y) > t/c$. Together, this implies
	\begin{equation*}
		b_{n_y}(\rho,\theta, F_y, x) > t/c.
		\end{equation*}
		As $n_x\geq K$ is the minimal index so that such an inequality
		at the point $x$ 
		is possible and $n_x\geq n_y$ we get that $n_x = n_y=:n$.
	Since $A_{n}(x) \cap A_{n}(y) =  \emptyset$ we know that in this case $i\neq i'$ and
	$x,y\in \Gamma_m$ implies $i-j =m = i'-j'$. Together, this
	yields $j\neq j'$ which means $F_x\cap F_y=\emptyset$, contradicting 
	the assumption $F_x\subseteq F_y$. Therefore, $F_x$ and $F_y$ are
	disjoint, concluding the proof of the fact that $\{ F_x : x\in
	\Gamma_m\}$ consists of
	mutually disjoint sets for each $m\in\mathbb Z^d$.

	If $(U_j)$ is a countable collection of disjoint members of $\mathscr A$
	and if $U\in \mathscr A$ with $\cup_{j=1}^\infty U_j
	\subset U$, then
		$\sum_{j=1}^\infty \theta(U_j) \leq \theta(U)$,
	since for finite sums this is clear by finite additivity and positivity
	of $\theta$ and
	the general case follows by passing to infinity.
	We apply this simple fact to the sum
	$\sum_{x\in\Gamma_m}\theta(F_x)$ with $U = A_{K,|m|_1}(B)$ to obtain
	from \eqref{eq:est1} 
	\begin{align*}
		|G_t| &\leq \frac{c}{t} \cdot \sum_{s=0}^{\infty} \rho^s 
		\theta\big(A_{K,s}(B) \big) \Big(\sum_{|m|_1 = s} 1\Big) \leq
		\frac{2^d c}{t}\sum_{s=0}^\infty \rho^s (s+1)^{d-1} \theta \big(
		A_{K,s}(B)\big),
	\end{align*}
	which is the conclusion of the theorem.
\end{proof}

Combining Proposition~\ref{prop:maximal} with the bound \eqref{eq:estPn} on the
operators $P_n$, we obtain that the
maximal function of the spline projectors $P_n$ also satisfies a weak type (1,1)
inequality
\begin{equation}\label{eq:weakPn}
	| \{ \sup_n \| P_n f \| > t\} | \leq \frac{C \|f\|_{L^1_X}
}{t}, \qquad t>0,\ f\in L^1_X(I^d),
\end{equation}
for some constant $C$ depending only on $k$. This proves Doob's inequality
\eqref{it:splines2} on page \pageref{it:splines2} for martingale spline
sequences on $I^d$. Indeed,
given a martingale spline sequence $(f_n)$ on $I^d$, 
apply \eqref{eq:weakPn} to the function $f=f_m$ for a fixed
positive integer $m$ and pass $m\to \infty$ to get
\eqref{it:splines2} for martingale spline sequences on $I^d$.

As a corollary, we have the following result about almost everywhere
convergence of $P_nf$ for $f\in L^1_X(I^d)$, proving \eqref{it:splines3} for
tensor spline projections.
\begin{thm}\label{thm:aeconv}
	Let $X$ be any Banach space and let $f\in L^1_X(I^d)$. Then, there
	exists $g\in L^1_X(I^d)$ such that
	\[
		P_n f \to g\qquad \text{$\lambda^d$-almost everywhere}.
	\]
\end{thm}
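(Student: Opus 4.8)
The plan is to follow the standard route from a weak type $(1,1)$ maximal inequality to almost everywhere convergence of $P_n f$, which in the present setting is made possible by estimate \eqref{eq:weakPn}. First I would reduce to a dense class: the union $\bigcup_n S_n$ is dense in the $L^2$-closure of itself, and finite linear combinations of tensor B-splines $N_{m,i}$ (hence elements of $\bigcup_n S_n$) are dense in $L^1_X(I^d)$ when $X$ is replaced by a finite-dimensional subspace and then one approximates general $f \in L^1_X$ by simple functions times such splines. The key point is that if $h \in S_m$ for some fixed $m$, then by nestedness $S_m \subseteq S_n$ for $n \geq m$, so $P_n h = h$ for all $n \geq m$; thus $P_n h \to h$ everywhere (eventually constant). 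So on this dense class the limit trivially exists.

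Next I would run the classical oscillation argument. For general $f \in L^1_X(I^d)$ and $\varepsilon > 0$, pick $h$ from the dense class with $\|f - h\|_{L^1_X} < \varepsilon$. Writing $g_n := P_n f$, the oscillation $\limsup_{m,n\to\infty}\|g_n - g_m\|$ is bounded pointwise by $2\sup_n \|P_n(f-h)\|$ since the $P_n h$-part converges. Applying \eqref{eq:weakPn} to $f - h$ gives
\[
	\big|\{ \limsup_{m,n}\|P_n f - P_m f\| > t \}\big| \leq \big|\{ \sup_n\|P_n(f-h)\| > t/2\}\big| \leq \frac{2C}{t}\|f-h\|_{L^1_X} \leq \frac{2C\varepsilon}{t}.
\]
Letting $\varepsilon \to 0$ shows the left-hand set has measure zero for every $t > 0$, hence $(P_n f(x))$ is Cauchy in $X$ for $\lambda^d$-almost every $x$; by completeness of $X$ it converges a.e. to some measurable $g : I^d \to X$.

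Finally I would check $g \in L^1_X(I^d)$. By Fatou's lemma, $\|g\|_{L^1_X} \leq \liminf_n \|P_n f\|_{L^1_X} \leq \sup_n \|P_n : L^1_X \to L^1_X\| \cdot \|f\|_{L^1_X} < \infty$, using boundedness of the tensor spline projectors on $L^1_X$ (item (i) in the introduction, which follows from Shadrin's theorem and the tensor structure). This gives $g \in L^1_X(I^d)$ and completes the proof. The only mild subtlety — and the step I expect to require the most care — is the density argument for the class $\bigcup_n S_n$ in $L^1_X(I^d)$: one must verify that tensor B-splines, whose supports consist of boundedly many atoms of $\mathscr F_n$, generate a dense subspace as $n$ varies over an arbitrary interval filtration, which uses that the atoms shrink (or at least that one can approximate indicator functions of rectangles). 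If one prefers to avoid this, an alternative is to quote \eqref{it:splines3} as already established (it is stated in the introduction with references) and simply cite that the argument here is identical, but I would rather give the short self-contained oscillation proof as above.
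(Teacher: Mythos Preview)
Your oscillation argument and the Fatou step are both fine; the gap is in the density claim. The set $\bigcup_n S_n$ is \emph{not} dense in $L^1_X(I^d)$ for a general interval filtration $(\mathscr F_n)$. Theorem~\ref{thm:aeconv} is stated without any hypothesis that $\cup_n\mathscr F_n$ generates the Borel $\sigma$-algebra, so the atoms need not shrink and you cannot approximate indicator functions of arbitrary rectangles by splines. (Take the extreme case $\mathscr F_n^\ell = \{\emptyset, I\}$ for all $n,\ell$: then $\bigcup_n S_n$ is a fixed finite-dimensional space of polynomials.) You correctly flag this step as ``the most care'', but it is not merely delicate---it is false as stated, and your argument collapses precisely here.

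The paper avoids this by taking a different dense class: the algebraic tensor product $C(\bar I)\otimes\cdots\otimes C(\bar I)$, which \emph{is} dense in $L^1_X(I^d)$ for any filtration. The price is that pointwise convergence of $P_n h$ for $h$ in this class is no longer trivially ``eventually constant''; instead one invokes the one-dimensional result \cite[Lemma~3.1]{MuellerPassenbrunner2020} (that $P_n^\ell \phi$ converges pointwise for continuous $\phi$ on $\bar I$, even for non-dense one-dimensional filtrations), and then the tensor structure of $P_n$ propagates this to tensor products of continuous functions. Once that is in place, your oscillation/Fatou argument goes through verbatim. So the fix is simply to swap dense classes: use continuous tensor functions instead of $\bigcup_n S_n$, and cite the one-dimensional convergence lemma for the base case.
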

\begin{remark} (i) The proof of Theorem~\ref{thm:aeconv} follows along the same lines as the proof of
the one-dimensional case \cite[Theorem~3.2]{MuellerPassenbrunner2020} and uses standard arguments for
passing from a weak type maximal inequality of the form \eqref{eq:weakPn} to
almost everywhere convergence of $P_nf$ for $L^1$-functions $f$. For this argument,  a
dense subset of $L^1$ is needed, for which it is ``clear'' that pointwise
convergence takes place. In \cite[Lemma~3.1]{MuellerPassenbrunner2020}, for 
one-dimensional splines, this dense set is chosen to be the space of
continuous functions $C(\bar I)$ on the closure of the interval $I$.
For an arbitrary dimension $d$, we can use
 $C(\bar I)\otimes \cdots \otimes C(\bar I)$ as dense subset of $L^1$, for which it is
a consequence of the one-dimensional convergence result \cite[Lemma~3.1]{MuellerPassenbrunner2020}
and its tensor product structure that $P_n f$ converges pointwise for $f\in C(\bar
I)\otimes \cdots \otimes C(\bar I)$.

(ii) As in the one-dimensional case, the limit function $g$ in
Theorem~\ref{thm:aeconv} can be 
identified explicitly as the ($L^1_X$-extension of the) orthogonal projection of the function $f$ onto the
closure of $\cup_n S_n$, which, in the particular case that $\cup_n \mathscr
F_n$ generates the Borel-$\sigma$-algebra on $I^d$, coincides with the
function $f$.
\end{remark}

We also note another immediate corollary of Theorem~\ref{thm:main_inequality}
that will be used later.
\begin{cor}\label{cor:limsup}
Let $(\mathscr F_n)$ be an interval filtration on $I^d$ and let $\theta$ be a
non-negative, finitely additive measure on the algebra $\mathscr A = \cup_n
\mathscr F_n$.
Let $D\in \mathscr A$ be arbitrary and set 
\[
	L_t := \Big\{ x \in I^d : \limsup_n \sum_{A\in\mathcal A(\mathscr F_n)} b_n(q,\theta,A,x)
	> t\Big\}.
\]
Let $R$ be a non-negative integer.

If $B\subset D$ is a Borel set such that $A_{K, R}(B) \subset D$
for some $K$, we have
\begin{equation}\label{eq:localweaktype}
	| B \cap L_t  | \leq  \frac{C}{t} \Big( \theta(D) + \sum_{s>R}
	q^{s/2}(s+1)^{d-1}  \theta(I^d) \Big), \qquad t>0
\end{equation}
for some constant $C$ depending only on $d$ and $q$.
\end{cor}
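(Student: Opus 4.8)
The plan is to derive \eqref{eq:localweaktype} as an essentially immediate consequence of Theorem~\ref{thm:main_inequality}. The key observation is that the $\limsup$ in the definition of $L_t$ is bounded above, for \emph{every} fixed index, by the corresponding supremum tail: since $\limsup_n a_n \le \sup_{n\ge K} a_n$ for any $K$, applied with $a_n = \sum_{A\text{ atom of }\mathscr F_n} b_n(q,\theta,A,x)$, we obtain the inclusion $L_t \subseteq \{\mathscr M_K\theta > t\}$, where $K$ is precisely the index furnished by the hypothesis $A_{K,R}(B)\subset D$.

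With this in hand, I would first apply Theorem~\ref{thm:main_inequality} to the Borel set $B$ and this particular $K$, which gives
\[
	|B\cap L_t| \le |B\cap\{\mathscr M_K\theta > t\}| \le \frac{C}{t}\sum_{s=0}^\infty q^{s/2}(s+1)^{d-1}\,\theta\big(A_{K,s}(B)\big).
\]
Then I would split the series at $s=R$. For $0\le s\le R$ the monotonicity $A_{K,s}(B)\subseteq A_{K,R}(B)$ in $s$, together with the hypothesis, gives $A_{K,s}(B)\subseteq D$; since each $A_{K,s}(B)$ is a finite union of atoms of $\mathscr F_K$, hence a member of the algebra $\mathscr A$, and $\theta$ is a non-negative finitely additive measure on $\mathscr A$ (thus monotone there), this yields $\theta(A_{K,s}(B))\le\theta(D)$. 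For $s>R$ I would simply bound $\theta(A_{K,s}(B))\le\theta(I^d)$. Using in addition that $\sum_{s\ge 0} q^{s/2}(s+1)^{d-1}$ is a finite constant depending only on $d$ and $q$, we conclude
\begin{align*}
	|B\cap L_t| &\le \frac{C}{t}\Big(\theta(D)\sum_{s=0}^R q^{s/2}(s+1)^{d-1} + \theta(I^d)\sum_{s>R}q^{s/2}(s+1)^{d-1}\Big)\\
	&\le \frac{C'}{t}\Big(\theta(D) + \sum_{s>R}q^{s/2}(s+1)^{d-1}\theta(I^d)\Big),
\end{align*}
which is \eqref{eq:localweaktype} after renaming the constant.

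I do not expect any genuine obstacle here; the only step meriting a remark is measurability, but $x\mapsto\sum_{A}b_n(q,\theta,A,x)$ is constant on the atoms of $\mathscr F_n$ and hence Borel measurable, so $\mathscr M_K\theta$ and $L_t$ are Borel sets and Theorem~\ref{thm:main_inequality} applies without modification. One could, if desired, also record that the tail $\sum_{s>R}q^{s/2}(s+1)^{d-1}$ decays geometrically in $R$, which is what makes this local weak-type bound useful later, but the statement only asks for the displayed inequality, so nothing further is needed.
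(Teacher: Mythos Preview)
Your proof is correct and follows exactly the approach of the paper, which simply records the inclusion $L_t\subset\{\mathscr M_K\theta>t\}$ and appeals to Theorem~\ref{thm:main_inequality}; you have merely spelled out the straightforward splitting of the resulting series at $s=R$ that the paper leaves implicit.
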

\begin{proof}
	This just follows from Theorem~\ref{thm:main_inequality} by noting that
	$L_t \subset \{ \mathscr M_K \theta > t \}$ for any positive integer
	$K$.
\end{proof}

\begin{remark}
	Assume that in Corollary~\ref{cor:limsup}, the measure $\theta$ is a
	$\sigma$-additive Borel
	measure on $\bar I^d$ and replace the term $\theta(A)$ in the definition \eqref{eq:defbn} of
	$b_n$ by the term $\theta(\overline{A})$ with the closure $\overline{A}$
	of $A$ in $\bar I^d$. Then, the assertion of Corollary~\ref{cor:limsup}
	still holds if we replace $\theta(D)$ and $\theta(I^d)$ on the right
	hand side of \eqref{eq:localweaktype} by $\theta(\overline{D})$ and
	$\theta(\bar I^d)$ respectively. Indeed, the only modification in the
	proof of Theorem~\ref{thm:main_inequality} is that we have to replace
	$F_x$ by $\overline{F_x}$ in \eqref{eq:est1}, but this only gives an
	additional factor of $2^d$ on the right hand side of
	\eqref{eq:main_inequality} and
	\eqref{eq:localweaktype} since each point of $\bar I^d$ is contained in
	at most $2^d$ closures of disjoint rectangles.
\end{remark}

\section{The Convergence Theorem for dense filtrations}\label{sec:convergence}
In this section, we show the remaining implication (a)$\implies$(b) of item
\eqref{it:splines4} on page~\pageref{it:splines4} for martingale spline sequences on
$I^d$ in the case where $\mathscr A:= \cup_n\mathscr F_n$ generates the Borel-$\sigma$-algebra
on $I^d$. We restrict ourselves to this special setting in this section to present
the crucial arguments in a concise form. In order to lift the subsequent result
from this hypothesis, we use technical arguments in the spirit of  those in the
proof of the one-dimensional result \cite[Sections 4 and
6]{MuellerPassenbrunner2020}. This will be presented in detail in Section~\ref{sec:convergence_general}.

\begin{thm}\label{thm:conv}
Let $(\mathscr F_n)$ be an interval filtration on $I^d$ so  that $\mathscr A=\cup_n\mathscr F_n$
	generates the Borel-$\sigma$-algebra and let
	$X$ be a Banach space with RNP. 
	Let $(g_n)$ be an $X$-valued martingale spline sequence adapted to
	$(\mathscr F_n)$ 
	with $\sup_n \|g_n\|_{L^1_X} < \infty$.

	Then, there exists $g\in L^1_X(I^d)$ so that $g_n \to g $ almost
	everywhere with respect to Lebesgue measure $\lambda^d$.
\end{thm}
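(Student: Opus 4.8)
The plan is to mimic the classical martingale convergence proof, using the spline projectors $P_n^k$ in place of conditional expectations and the weak type inequality of Proposition~\ref{prop:maximal} (more precisely Corollary~\ref{cor:limsup}) in place of Doob's inequality. First I would define an $X$-valued set function on the algebra $\mathscr A = \cup_n \mathscr F_n$: for an atom $A$ of $\mathscr F_n$, set $\nu(A) := \int_A g_m\dif\lambda^d$ for any $m\geq n$, and extend additively to all of $\mathscr A$. The martingale spline property $P_n^k g_{n+1}= g_n$ together with the B-spline representation \eqref{eq:rep_Pn} should guarantee this is well defined (independent of $m\geq n$): indeed, integrating $g_n = P_n^k g_{n+1}$ against a B-spline $N_{n,i}$ and using biorthogonality one recovers $\int g_{n+1} N_{n,i}$, so all "moments" against functions in $S_n$, in particular against $\charfun_A$ when $\charfun_A\in S_n$ — which holds since $\mathscr F_n$ is an interval filtration and each atom's characteristic function lies in every $S_m$, $m\geq n$ — agree. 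One then checks $\nu$ is finitely additive, of bounded variation (here $\sup_n\|g_n\|_{L^1_X}<\infty$ and Shadrin-type $L^1$-boundedness of $P_n^k$ enter: $|\nu|(I^d) = \sup_n \|g_n\|_{L^1_X}$ up to a constant), and in fact $\lambda^d$-continuous and countably additive on $\mathscr A$ because $\nu(A)= \int_A g_m$ with $g_m\in L^1_X$ and atoms shrink. By Theorem~\ref{thm:extension}, $\nu$ extends to a countably additive $\overline{\nu}$ on the Borel $\sigma$-algebra (using that $\mathscr A$ generates it), and since $X$ has RNP and $\overline{\nu}$ is $\lambda^d$-continuous of bounded variation, there is $g\in L^1_X(I^d)$ with $\overline{\nu}(B)=\int_B g\dif\lambda^d$.

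Next I would identify $g_n$ with $P_n^k g$. From the construction, $\int_{I^d} g\, N_{n,i}\dif\lambda^d = \int_{I^d} g_m N_{n,i}\dif\lambda^d$ for $m\geq n$ (approximating $N_{n,i}$ by $\mathscr F_m$-simple functions, or directly since $N_{n,i}\in S_m$ and $\nu$ agrees with $g_m\dif\lambda^d$ against all of $S_n\subset S_m$), and this equals $\int g_n N_{n,i}$; hence by \eqref{eq:rep_Pn}, $P_n^k g = g_n$. So it suffices to prove $P_n^k g\to g$ $\lambda^d$-a.e., but in the present setting ($\mathscr A$ generates the Borel $\sigma$-algebra) this is exactly Theorem~\ref{thm:aeconv} together with remark (ii) after it, which identifies the limit as the projection onto the closure of $\cup_n S_n$, and that projection is the identity because $\cup_n \mathscr F_n$ generates the Borel $\sigma$-algebra and hence $\cup_n S_n$ is $L^2$-dense (the atoms' characteristic functions, already piecewise constant splines, are dense). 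This closes the proof.

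I expect the main obstacle to be the well-definedness and bounded-variation claims for $\nu$, i.e. verifying that the "spline martingale" data really does assemble into a genuine vector measure on $\mathscr A$. The subtlety is that $P_n^k$ is not a conditional expectation for $k\geq 2$, so $\int_A g_{n+1}\dif\lambda^d = \int_A g_n\dif\lambda^d$ is not automatic for an arbitrary $\mathscr F_n$-atom $A$; one must use that $\charfun_A\in S_n$ (valid here because the spline spaces contain the piecewise constants relative to the $\sigma$-algebra of the relevant coordinate partitions — more carefully, $\charfun_A$ for an atom $A=A_1\times\cdots\times A_d$ is a tensor product of one-dimensional characteristic functions of consecutive atoms, each of which is a spline of every order) together with the self-adjointness of $P_n^k$ and $P_n^k\charfun_A = \charfun_A$ to get $\int_A g_{n+1} = \langle g_{n+1},\charfun_A\rangle = \langle g_{n+1}, P_n^k\charfun_A\rangle = \langle P_n^k g_{n+1},\charfun_A\rangle = \langle g_n,\charfun_A\rangle = \int_A g_n$. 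For bounded variation, partition $I^d$ into $\mathscr F_n$-atoms and use $\sum_{A} \|\int_A g_n\| \leq \|g_n\|_{L^1_X}\leq \sup_m\|g_m\|_{L^1_X}$; refining the partition only increases the sum but it stays bounded by a uniform constant (here one may invoke boundedness of $P_n^k$ on $L^1_X$ if one wants to compare across different $n$, but comparing each partition to the atoms of a single sufficiently fine $\mathscr F_n$ suffices). Once $\nu$ is in hand, the RNP step and the identification $g_n = P_n^k g$ are routine, and the convergence is handed to Theorem~\ref{thm:aeconv}.
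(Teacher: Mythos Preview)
Your argument has a genuine gap at its very first step. You claim that for an atom $A$ of $\mathscr F_n$ we have $\charfun_A\in S_n$, and hence $P_n^k\charfun_A=\charfun_A$, which you use to conclude $\int_A g_{n+1}=\int_A g_n$. This is false as soon as some $k_\ell\geq 2$: by definition $S^{k_\ell}(\mathscr F_n^\ell)$ consists of $C^{k_\ell-2}$ functions, so for $k_\ell\geq 2$ every element is continuous, while $\charfun_{(a_\ell,b_\ell]}$ has jumps at interior gridpoints. Consequently $\charfun_A\notin S_n$, $P_n^k\charfun_A\neq\charfun_A$, and $\int_A g_m$ genuinely depends on $m\geq n$; your set function $\nu$ is not well defined. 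This is precisely why the paper does not write $\nu(A)=\int_A g_m$ but instead first builds the operator $T$ on $\cup_m S_m$, extends it to a completion $W$ containing limits $I_A$ of spline approximations to $\charfun_A$ (using the density hypothesis on $\mathscr A$), and only then sets $\nu(A)=T(I_A)$.

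There is a second, more conceptual gap. Even once $\nu$ is correctly defined, your assertion that $\nu$ is $\lambda^d$-continuous is unjustified: the heuristic ``$\nu(A)=\int_A g_m$ with $g_m\in L^1_X$'' would need a \emph{fixed} $g_m$, but finer atoms force larger $m$, and $L^1$-boundedness of $(g_m)$ gives no uniform integrability. In general $\nu$ has a nontrivial singular part $\nu_s$, and the paper must apply the Lebesgue decomposition (Theorem~\ref{thm:lebesgue}) and then show separately that $P_n\nu_s\to 0$ a.e.\ via Corollary~\ref{cor:limsup}. Your plan skips this entirely; the reduction to $g_n=P_n^k g$ with $g\in L^1_X$ and an appeal to Theorem~\ref{thm:aeconv} is only possible for the $\lambda^d$-continuous part $\nu_c$, and the treatment of $\nu_s$ is where the new maximal inequality actually enters the convergence proof.
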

\begin{remark}
	As for martingales (see \cite{DiestelUhl1977}), the basic proof idea of this result is to
	define a vector measure $\nu$ based upon the martingale spline sequence
	$(g_n)$, whose absolutely continuous part with respect to Lebesgue
	measure $\lambda^d$ has a density $g\in L^1_X$ by the RNP of $X$, which
	is then the a.e. limit of $g_n$.
\end{remark}
\begin{proof}
	\textsc{Part I: } The limit operator $T$. \\
	For $f\in S_m$ and $n\geq m$,  since the operator $P_n$ is selfadjoint and using
the martingale spline property of the sequence $(g_n)$,
\begin{align*}
	\int_{I^d} g_n \cdot f \dif\lambda^d &= \int_{I^d} g_n\cdot P_m
	f\dif\lambda^d = \int_{I^d} P_m g_n \cdot f\dif
	\lambda^d = \int_{I^d} g_m
\cdot f\dif\lambda^d.
\end{align*}
This means in particular that for all $f\in \cup_m
S_m$, the limit of $\int_{I^d} g_n\cdot f\dif\lambda^d$
exists, so we define the linear operator 
\begin{equation*}
	T : \cup_m S_m \to X,\qquad  f\mapsto \lim_n
	\int_{I^d} g_n\cdot
	f \dif\lambda^d.
\end{equation*}
We can write $g_n$ in terms of this operator $T$. Indeed, by the martingale
spline property of $(g_n)$ again,
\begin{equation}\label{eq:gn}
\begin{aligned}
	g_n = P_n g_n &= \sum_{i} \int_{I^d} g_n N_{n,i}\dif\lambda^d\cdot
	N_{n,i}^*  \\
	&=\sum_{i} \lim_m \int_{I^d} g_m N_{n,i} \dif\lambda^d \cdot N_{n,i}^*
	=\sum_{i} (T N_{n,i}) N_{n,i}^{*}.
\end{aligned}
\end{equation}
By Alaoglu's theorem, we may choose a  subsequence $\ell_n$ such that the
bounded sequence of measures $\|g_{\ell_n}\|_X
\dif\lambda^d$ converges in the weak*-topology on the space of Radon measures on
the closure $\bar{I}^d$ of $I^d$
to some finite scalar measure $\mu$ on
$\bar{I}^d$, i.e. 
\[
	\lim_{n\to\infty} \int_{\bar{I}^d} f \|g_{\ell_n}\|  \dif\lambda^d =
	\int_{\bar{I}^d}
	f\dif\mu,\qquad f\in C(\bar{I}^d).
\]
For a fixed positive integer $m$, we then get
another subsequence of  $(\ell_n)$, again denoted by $(\ell_n)$, so that for each atom $A$ of $\mathscr F_m$,
the sequence $\|g_{\ell_n}\|\dif\lambda^d$ weak*-converges to some Radon measure
$\mu_A$ on the closure $\overline{A}$ of $A$ satisfying 
$\mu = \sum_{A \in\mathcal A(\mathscr F_m)} \mu_A$.
Each function $f\in S_m$ is continuous and a polynomial in the interior $A^\circ$ of each atom
$A\in\mathscr F_m$. Denote by $f_A$ the continuous function on the closure
$\overline{A}$ of $A$ that coincides with $f$ on $A^\circ$.
Then, for $\ell_n\geq m$ and $f\in S_m$
\begin{align*}
	\|Tf\| &= \Big\|\int_{I^d}f g_{\ell_n} \dif \lambda^d\Big\| \leq
	\int_{I^d} |f| \|
	g_{\ell_n}\| \dif \lambda^d\\
	&=\sum_{A\in\mathcal A(\mathscr F_m)} \int_{\overline{A}} |f_A|
	\|g_{\ell_n}\|\dif \lambda^d \rightarrow\sum_{A\mathcal A(\mathscr F_m)}
\int_{\bar{I}^d} |f_A| \dif\mu_{A} \\
&\leq \sum_{A\in\mathcal A(\mathscr F_m)} \int_{\bar{I}^d} \limsup_{s\to y}
	|f(s)| \dif\mu_A(y)
	= \int_{\bar{I}^d} \limsup_{s\to y} |f(s)| \dif\mu(y).
\end{align*}
For $f\in\cup_n S_n$ define
\begin{equation}\label{eq:strangenorm}
	\| f \| := \int_{\bar{I}^d} \limsup_{s\to y} |f(s)|\dif\mu(y),
\end{equation}
which is a seminorm on $\cup_n S_n$. As for $L^p$-spaces, we factor out the
functions $f\in\cup_n S_n$ with $\|f\|=0$ in order to get a norm. Then, 
denote by $W$  the completion of $\cup_n S_n$ in this norm 
and extend the operator $T$ to $W$ continuously.

\textsc{Part II: }Representing $T$ in terms of a vector measure $\nu$. \\
Let $Q = \prod_{\ell=1}^d (a_\ell, b_\ell]$ be an arbitrary atom of the $\sigma$-algebra
$\mathscr F_n$ for some positive integer $n$. 
Let $\ell\in \{1,\ldots,d\}$ be an arbitrary coordinate direction. If the order
of the polynomials $k_\ell$ in direction $\ell$ equals $1$ (piecewise constant
case), we set $f_m^\ell =
\charfun_{(a_\ell,b_\ell]}$ for $m\geq n$, which satisfies $f_m^\ell \in
S^{k_\ell}(\mathscr F_m^\ell)$.
If $k_\ell > 1$, we first choose an open interval $O$ and a closed interval $C$
(both in $I$) so that $C\subseteq (a_\ell, b_\ell] \subseteq O$ and $|O\setminus
C| \leq 1/m$. The sets $C$ and $O$ are chosen so that as many endpoints of $C$
and $O$ coincide with the corresponding endpoints of $(a_\ell,b_\ell]$ as
possible.
Then, let $f_m^\ell\in \cup_j S^{k_\ell}(\mathscr F_j^\ell)$ be a non-negative function that is bounded by
$1$ and satisfies
\[
	\supp f_{m}^\ell \subset O\qquad \text{and}\qquad f_m^\ell \equiv 1
	\text{ on } C.
\]
Such a function exists since $\mathscr A$ generates the Borel-$\sigma$-algebra
if one
additionally notices the facts that B-splines form a partition of unity and
have localized support.
If we define $f_m = f_m^1 \otimes \cdots \otimes f_m^d$,
the sequence $(f_m)$ is Cauchy
in $\cup_j S_j$ with respect to the norm in \eqref{eq:strangenorm}
and we let $I_Q$  be the limit in $W$ of the
sequence $(f_m)$ satisfying 
\[
	\|T I_Q\| = \lim_{m\to\infty} \|T f_m\| \leq
\mu(\overline{Q})
\]  (here, the closure of $Q$ is taken in $\bar I^d$).
This definition of $I_Q$ also has the property that if $Q$ is an atom in
$\mathscr F_n$ and $(Q_j)_{j=1}^\ell$ is a finite sequence of disjoint atoms
$Q_j$ in $\mathscr F_{n_j}$ with $n_j\geq n$ and  $Q=\cup_{j=1}^\ell Q_j$,
we have $I_{Q} = \sum_{j=1}^\ell I_{Q_j}$.
Therefore, if $\mathscr F_n\ni A = \cup_{j=1}^\ell Q_j$ for some disjoint atoms $(Q_j)_{j=1}^\ell$
in $\mathscr F_n$, it is well defined to set 
\[
	I_A = \sum_{j=1}^\ell I_{Q_j}\in W.
\]
Based upon that, we define the finitely additive vector measure $\nu$ on
$(I^d,\mathscr A)$
with values in $X$ by
\begin{equation}\label{eq:defnu}
	\nu(A) := T(I_{A}),\qquad A\in\mathscr A.
\end{equation}
This vector measure $\nu$ is of bounded variation, since if $\pi$ is a finite partition
of $I^d$ into sets of $\mathscr A$ and if $m<\infty$ is the minimal index
so that $A\in \mathscr F_m$ for all $A\in \pi$, we have
\[
	\sum_{A\in\pi} \|T(I_A)\| \leq \sum_{Q \in\mathcal A(\mathscr F_m)} 
	\|T(I_Q)\|\leq \sum_{Q \in\mathcal A(\mathscr F_m)}\mu(\overline{Q}) \leq
	2^d \mu(\bar{I}^d), 
\]
as each point in $\bar{I}^d$ is contained in at most $2^d$ closures of atoms of
$\mathscr F_m$. 

Observe that for all $f\in \cup_n S_n$, we have
\begin{equation}\label{eq:ext}
	\int_{I^d} f\dif\nu = T(f).
\end{equation}
Indeed, each $f\in\cup_n S_n$ can be approximated uniformly by linear combinations
of characteristic functions of atoms of the form $\chi_m :=\sum_{Q \in\mathcal A(\mathscr F_m)} \alpha_Q \charfun_Q$ as $m\to\infty$,
which then also has the property that 
$f_m:=\sum_{Q\in\mathcal A(\mathscr F_m)} \alpha_Q I_Q \to f$ in $W$ as
$m\to\infty$ and thus also  $Tf_m \to Tf$ in $X$ by the continuity of the
operator $T$. As, by definition \eqref{eq:defnu} of $\nu$, we have $ \int \chi_m
\dif\nu= Tf_m$, equation \eqref{eq:ext} follows by letting $m\to\infty$.

\textsc{Part III: }Conclusion. \\Continuing the calculation in equation
\eqref{eq:gn}, using the measure $\nu$ and \eqref{eq:ext},
\begin{equation}\label{eq:gn2}
	g_n = \sum_i \int_{I^d} N_{n,i}\dif\nu \cdot N_{n,i}^{*}.
\end{equation}
Apply Lebesgue's decomposition Theorem~\ref{thm:lebesgue} to the measure $\nu$
with respect to $\lambda^d$ to get two finitely
additive measures $\nu_c,\nu_s$ of bounded variation with
\begin{equation}\label{31-10-17-9}\nu = \nu_c + \nu_s, \end{equation}
where $\nu_c$ is $\lambda^d$-continuous and $|\nu_s|$ is singular to
$\lambda^d$. As $\lambda^d$ is countably additive, so is the
$\lambda^d$-continuous measure $\nu_c$ and by the extension theorem
(Theorem~\ref{thm:extension}) extends uniquely to  a countably additive vector
measure $\overline{\nu_c}$ on the Borel-$\sigma$-algebra on $I^d$, which, by the RNP of $X$ can be written as
$\dif\overline{\nu_c} = g\dif\lambda^d$ for some $g\in L^1_X$. Therefore, 
\[
	g_n = \sum_{i} \int_{I^d} N_{n,i} g\dif \lambda^d \cdot
	N_{n,i}^{*} + \sum_{i} \int_{I^d} N_{n,i}\dif\nu_s \cdot
	N_{n,i}^{*}.
\]
The first part on the right hand side of
this equation equals $P_n g$ for the $L^1_X$ function $g$ and this converges a.e. to $ g$
 by Theorem~\ref{thm:aeconv} and the remark following it.

The second part, denoted by $P_n\nu_s$, converges to $0$ almost everywhere,
which we will now show.
Let $t>0$ be  arbitrary  and define
\[ 
	G_{t} := 
\{ y\in I^d : \limsup_n \|P_n\nu_s(y)\| >t\}.
\]
Then, let $\varepsilon>0$ be arbitrary and choose
$D\in \mathscr A$ 
with the property
\[
	\lambda^d(D^c)+|\nu_s|(D)\leq \varepsilon,
\] 
 which is possible since 
$|\nu_s|$ is singular to $\lambda^d$. 
By \eqref{eq:estPn}, replacing $\|f\|\dif \lambda^d$ with $|\nu_s|$,
\begin{align*}
	\|P_n\nu_s(y)\| &\leq C_k \sum_{A \in \mathcal A(\mathscr F_n)} b_n(q,
	|\nu_s|, A, y)
\end{align*}
for some constants $C_k$ and $0<q<1$ depending only on $k$ with $b_n$ as in
\eqref{eq:defbn}. 
Therefore, $G_t \subset L_{t/C_k}$ with 
\[
	L_u = \Big\{ y\in I^d : \limsup_n \sum_{A \in\mathcal A(\mathscr F_n)} b_n(q,
|\nu_s|, A, y) > u \Big\}.
\]
We apply Lemma~\ref{lem:limsup} below (with $Y=I^d$) to the measure $\theta=|\nu_s|$ on
$\mathscr A$ and the set $D$ to
get, for any $u>0$, the estimate 
\[ 
	|L_u| = |D^c\cap L_u| + |D\cap L_u|  \leq \varepsilon + C\varepsilon/u.
	\]
Since this is true for any $\varepsilon>0$, we obtain $|L_u| = 0$ for any
$u>0$. Thus,
\[
	|\{ y\in I^d: \limsup_n \|P_n\nu_s(y)\| >0\}| =\Big|\bigcup_{r=1}^\infty
	G_{1/r}\Big| \leq \Big|\bigcup_{r=1}^\infty L_{1/(C_k r)} \Big|=
	\lim_{r\to\infty} |L_{1/(C_k r)}| = 0,
\]
which completes the proof of the theorem. 
\end{proof}

%

\section{The convergence theorem for arbitrary filtrations}
\label{sec:convergence_general}
	Now we discuss the necessary modifications in the proof of Theorem~\ref{thm:conv}
	when the interval filtration $(\mathscr F_n)$ on $I^d$ is allowed to be
	arbitrary. Assume for some Banach space $X$ with RNP, $(g_n)$ is an $X$-valued martingale spline
	sequence adapted to $(\mathscr F_n)$ with $\sup_n \|g_n\|_{L^1_X} <
	\infty$.

	Part I of the proof of Theorem~\ref{thm:conv} does not use the density
	of the filtration $(\mathscr F_n)$ in $I^d$, which means that
	we get an operator $T:\cup_n S_n \to X$ and a finite measure $\mu$ on
	$\bar I^d$ satisfying
	\begin{equation}
		\label{eq:T_mu}
		\| Tf \| \leq \int_{\bar I^d} \limsup_{s\to y}
		|f(s)|\dif\mu(y),\qquad f\in \cup_n S_n.
	\end{equation}
	The operator $T$ is then extended continuously to the completion $W$ of $\cup_n S_n$
	w.r.t. the norm on the right hand side of \eqref{eq:T_mu}.
	With the aid of this operator, the martingale spline sequence $(g_n)$ is
	written as
	\begin{equation}\label{eq:rep_gn_T}
		g_n = \sum_i (TN_{n,i}) N_{n,i}^*.
	\end{equation}

	We  distinguish the analysis of the convergence of
	$g_n(y)$ depending on in which coordinate direction the filtration $(\mathscr F_n)$ 
	is dense at the point $y$.
	To this end, for $\ell=1,\ldots,d$, we define $\Delta_n^\ell\subset \bar I$ to be the set of
	all endpoints of atoms in the $\sigma$-algebra $\mathscr F_n^\ell$.
	Next, let $U^\ell$ be the complement (in $\bar I$) of the set of all accumulation points
	of $\cup_n \Delta_n^\ell$. Note that $U^\ell$ is open (in $\bar I$), thus it can be
	written as a countable union of disjoint open intervals $(U^\ell_j)_j$. 
	Let 
	\begin{align*}
		B_j^\ell = \{ a\in \partial U_j^\ell : \text{ there is no
			sequence of points in $U_j^\ell \cap (\cup_n
		\Delta_n^\ell)$ } 
		\text{that converges to $a$} \}
	\end{align*}
	and define $V_j^\ell := U_j^\ell \cup B_j^\ell$ and $V^\ell := \cup_j
	V_j^\ell$.

\begin{lem}\label{lem:limsup}
Let $(\mathscr F_n)$ be an interval filtration on $I^d$ and let $\theta$ be a non-negative, 
finitely additive and finite
	measure on $\mathscr A$.
	For $\varepsilon>0$, let $D\in\mathscr A$ 
	 with $\theta(D)\leq\varepsilon$
	and 
	\[
		L_t := \Big\{ x \in I^d : \limsup_n \sum_{A\in\mathcal A(\mathscr F_n)} b_n(q,\theta,A,x)
	> t\Big\}.
	\]

	Then, there exists a finite constant $C$, depending only on $q$ and on
	$d$
	 so that 
	\[
		|D\cap L_t \cap Y|\leq \frac{C  \varepsilon}{t},\qquad t>0,
	\]
	with  $Y = (V^1)^c \times \cdots \times(V^d)^c$.
\end{lem}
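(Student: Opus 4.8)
The strategy is to combine the geometry of the sets $(V^\ell)^c$ with Corollary~\ref{cor:limsup}: after removing a Lebesgue-null set, every point of $D\cap Y\cap L_t$ will be shown to lie in one of an increasing family of sets to which Corollary~\ref{cor:limsup} applies, with the truncation parameter $R$ taken arbitrarily large; sending $R\to\infty$ then kills the tail term $\theta(I^d)\sum_{s>R}q^{s/2}(s+1)^{d-1}$ and leaves only $\theta(D)\le\varepsilon$.

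The key geometric step is to prove that if $x\in Y$ then $A_{K,R}(x)$ contracts to $\{x\}$ as $K\to\infty$: for every open neighbourhood $U$ of $x$ and every $R\ge 0$ there is a $K$ with $A_{K,R}(x)\subseteq U$. Since $A_{K,R}(x)\subseteq A_{K,R}^1(x_1)\times\cdots\times A_{K,R}^d(x_d)$, where $A_{K,R}^\ell(x_\ell)$ denotes the union of the atom of $\mathscr F_K^\ell$ containing $x_\ell$ with its $R$ neighbours on each side, it suffices to show, for each $\ell$ and each $x_\ell\in(V^\ell)^c$, that for every $\delta>0$ and every $R$ there is $K$ with $A_{K,R}^\ell(x_\ell)\subseteq(x_\ell-\delta,x_\ell+\delta)$. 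The definition of $V^\ell$ is arranged exactly so that $x_\ell\in(V^\ell)^c$ forces the grid point set $\bigcup_n\Delta_n^\ell$ to accumulate at $x_\ell$ from both sides (or from the single available side when $x_\ell$ is an endpoint of $\bar I$): if this failed, the endpoint-free open interval on one side of $x_\ell$ would exhibit $x_\ell$ as a point of some $B_j^\ell$, hence of $V^\ell$, a contradiction. Given such two-sided accumulation, for $K$ large the atom of $\mathscr F_K^\ell$ through $x_\ell$ together with its $R$ neighbours on each side is squeezed into $(x_\ell-\delta,x_\ell+\delta)$. I would also record the monotonicity $A_{K',R}(x)\subseteq A_{K,R}(x)$ for $K'\ge K$, which holds because the atoms of an interval filtration have positive length, so each $\mathscr F_K^\ell$-atom contains at least one $\mathscr F_{K'}^\ell$-atom and hence a geodesic of $R$ steps among $\mathscr F_{K'}$-atoms crosses at most $R$ $\mathscr F_K$-grid hyperplanes in total.

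Now fix a non-negative integer $R$ and put $G_K:=\{x\in I^d:A_{K,R}(x)\subseteq D\}$. By the monotonicity just noted, $(G_K)_K$ is a non-decreasing sequence of $\mathscr F_K$-measurable sets, each contained in $D$ (as $x\in A_{K,R}(x)$). Because $D\in\mathscr A$ is a finite union of rectangles, $D\setminus\operatorname{int}D$ is Lebesgue-null, and by the contraction property every $x\in(\operatorname{int}D)\cap Y$ lies in $G_K$ for $K$ large; thus $(\operatorname{int}D)\cap Y\subseteq\bigcup_K G_K$. For each $K$ one has $A_{K,R}(G_K)\subseteq D$, so Corollary~\ref{cor:limsup} applied with $B=G_K$ and this same index $K$ gives, for all $t>0$,
\[
	|G_K\cap L_t|\le\frac{C}{t}\Big(\theta(D)+\theta(I^d)\sum_{s>R}q^{s/2}(s+1)^{d-1}\Big),
\]
with $C=C(d,q)$. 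Since the sets $G_K\cap L_t$ increase to a superset of $(\operatorname{int}D)\cap Y\cap L_t$ and $|D\setminus\operatorname{int}D|=0$, continuity of Lebesgue measure from below yields
\[
	|D\cap Y\cap L_t|=|(\operatorname{int}D)\cap Y\cap L_t|\le\frac{C}{t}\Big(\theta(D)+\theta(I^d)\sum_{s>R}q^{s/2}(s+1)^{d-1}\Big).
\]
This holds for every $R$; since $\sum_s q^{s/2}(s+1)^{d-1}<\infty$, letting $R\to\infty$ gives $|D\cap Y\cap L_t|\le\frac{C}{t}\theta(D)\le C\varepsilon/t$, which is the claim.

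The main obstacle is the geometric step: carefully verifying that membership in $(V^\ell)^c$ does force two-sided accumulation of grid points — handling the half-open atoms $(a,b]$ and the boundary behaviour at the endpoints of $\bar I$ — and hence that $A_{K,R}^\ell(x_\ell)$ contracts. Everything after that is a soft covering and continuity-of-measure argument resting on Corollary~\ref{cor:limsup}.
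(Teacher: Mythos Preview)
Your argument is correct and follows a genuinely different, somewhat softer route than the paper's. Both proofs rest on the same geometric fact --- that for $x_\ell\in(V^\ell)^c$ the grid points $\bigcup_n\Delta_n^\ell$ accumulate at $x_\ell$ from both available sides, so that $A_{K,R}(x)$ eventually shrinks into any neighbourhood of $x$ --- and both feed this into Corollary~\ref{cor:limsup}. The paper, however, proceeds \emph{constructively}: it writes $D=\bigcup_{j=1}^L Q_j$ as a union of atoms and explicitly shrinks each factor $Q_j^\ell$ to a subinterval $J^\ell$ so that (a) the trimmed set $B=\bigcup_j Q_j'$ satisfies $|(D\setminus B)\cap Y|\le\varepsilon/t$, and (b) for a single fixed $R$ (chosen so that the tail $\sum_{s>R}q^{s/2}(s+1)^{d-1}\theta(I^d)\le\varepsilon$) one can find a single $K$ with $A_{K,R}(B)\subset D$; then Corollary~\ref{cor:limsup} is applied once. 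Your approach replaces this explicit construction by a double limiting procedure: exhaust $(\operatorname{int}D)\cap Y$ by the increasing sets $G_K=\{x:A_{K,R}(x)\subseteq D\}$, apply Corollary~\ref{cor:limsup} for each $K$, pass $K\to\infty$ using continuity of Lebesgue measure, and then send $R\to\infty$ to annihilate the tail. This avoids both the atom-by-atom shrinking and the auxiliary $\varepsilon/t$ loss, at the cost of verifying the monotonicity $A_{K',R}(x)\subseteq A_{K,R}(x)$ (your hyperplane-crossing argument is fine; coordinate-wise one has $|i'_\ell-j'_\ell|\le|i_\ell-j_\ell|$, whence the $\ell^1$ bound). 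The paper's version is more hands-on and yields a concrete $K$; yours is cleaner and makes the role of the tail term more transparent.
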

\begin{proof}
	We shrink the set $D$
	properly to then apply Corollary~\ref{cor:limsup}. This is done as
	follows. Since
	$D\in\mathscr A$, we can write it as
$D = \cup_{j=1}^L Q_j$ 
for disjoint atoms $(Q_j)$ of some $\sigma$-algebra $\mathscr F_n$.
For each $j$, we have $Q_j = Q_j^1 \times \cdots\times Q_j^d$ for some intervals $Q_j^\ell$,
$\ell = 1,\ldots, d$.
Assume without restriction that for all $\ell\in\{1,\ldots,d\}$, the interior of
the interval
$Q_j^\ell$ contains at least two points from
$(V^\ell)^c$, since otherwise we would have $|Q_j \cap L_t \cap Y|\leq |Q_j\cap
Y| =0$.
Fix $\ell \in \{1,\ldots,d\}$, set $\eta =\varepsilon/(tL |I|^{d-1} d)$ and define the
interval $J^\ell \subset Q_j^\ell$ such that 
\begin{enumerate}
	\item $Q_j^\ell\setminus J^\ell$ has two connected components and in
		each one 
		there exists a point of $(V^\ell)^c$
		that has positive distance
		to $J^\ell$ and to the boundary of $Q_j^\ell$,
	\item $ | (Q_j^\ell \setminus J^\ell) \cap  (V^\ell)^c| \leq \eta $.
\end{enumerate}
This is possible since $Q_j^\ell\cap (V^\ell)^c$ contains at least two points.
Then, set $Q_j' = J^1\times \cdots \times J^d$ and $B = \cup_{j=1}^L Q_j'$ and
we get, by the choice of $\eta$,
\begin{equation}\label{eq:D_and_B_notdense}
	|D\cap L_t\cap Y| \leq | (D\setminus B)\cap Y| + |B\cap L_t| \leq
	\varepsilon/t + |B\cap L_t|.
\end{equation}
	Choose the positive integer $R$ sufficiently large so that 
	\[
		\sum_{s>R} (s+1)^{d-1} q^{s/2} \theta(I^d) \leq \varepsilon.
	\]
	Then, there exists an integer $K$ so that
	$A_{K,R}(B)\subset D$, which is true 
	by construction of $B$.
	Apply now Corollary~\ref{cor:limsup} to
	get $|B\cap L_t| \leq C\varepsilon/t$, which together with
	\eqref{eq:D_and_B_notdense} implies the assertion of the lemma.
\end{proof}
\begin{remark}
	Assume that in Lemma~\ref{lem:limsup}, the measure $\theta$ is a
	$\sigma$-additive Borel
	measure on $\bar I^d$ and replace the term $\theta(A)$ in the definition \eqref{eq:defbn} of
	$b_n$ by the term $\theta(\overline{A})$ with the closure $\overline{A}$
	of $A$ in $\bar I^d$. Then, the assertion of Lemma~\ref{lem:limsup}
	still holds with an additional factor of $2^d$ on the constant $C$,
	since the same is true for Corollary~\ref{cor:limsup}.
\end{remark}

	For a point $y = (y^1,\ldots,y^d)\in I^d$, each
	coordinate $y^\ell$ is either contained in some set $V^\ell_{j_\ell}$ or
	in $(V^\ell)^c$. After rearranging the coordinates, we assume that 
	$y \in F$,
where $F = F^1\times \cdots \times F^d$ with 
$F^\ell = V_{j_\ell}^\ell$ if $\ell \leq s$ and $F^\ell = (V^\ell)^c$ if $\ell
> s$ for some $s\in \{0,\ldots,d\}$. 
We want to split $g_n(y) = \sum_i (T N_{n,i}) N_{n,i}^*(y)$ into the parts where $T$ acts on functions
	restricted to the set $F_\delta$ for $\delta\in \{ 0, 1 \}^d$
	with $F_\delta = E^1 \times \cdots \times E^d$ where $E^\ell = F^\ell$
	if $\delta_\ell=0$ and $E^\ell = (F^\ell)^c$ if $\delta_\ell=1$.
	In order to construct elements in $W$ that correspond to the functions
	$N_{n,i}\charfun_{F_\delta}$, we need the following lemma.

	\begin{lem}\label{lem:restriction}
		For any $\ell\in \{1,\ldots,d\}$, let $f\in S^{k_\ell}(\mathscr
		F_n^\ell)$ for some $n$. For any interval $V_{j}^\ell$, there
		exists a sequence $(h_m)$ of functions $h_m\in
		S^{k_\ell}(\mathscr F_m^\ell)$, open intervals $O_m$ and closed 
		intervals $C_m$ (both in $\bar I$) satisfying
		\begin{enumerate}
			\item $O_m \to V_j^\ell$ as $m\to\infty$,
			\item $\supp h_m\subset O_m$,
			\item $h_m \equiv f$ on $C_m\cap I$,
			\item The closure of $O_m\setminus C_m$ converges to the
				empty set as $m\to \infty$.
		\end{enumerate}
	\end{lem}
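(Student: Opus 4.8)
\emph{Approach.} I would build $h_m$ by truncating the B-spline expansion of $f$ in a sufficiently fine spline space. Since the filtration is increasing, $\mathscr F_m^\ell$ refines $\mathscr F_n^\ell$ for $m\ge n$, so $S^{k_\ell}(\mathscr F_n^\ell)\subseteq S^{k_\ell}(\mathscr F_m^\ell)$ and $f=\sum_i c_i^{(m)}N_{m,i}^\ell$ in the B-spline basis of $S^{k_\ell}(\mathscr F_m^\ell)$. Given an open interval $O_m\subseteq\bar I$ whose endpoints are endpoints of atoms of $\mathscr F_m^\ell$, put $\Theta_m:=\{\,i:\supp N_{m,i}^\ell\subseteq O_m\,\}$ and $h_m:=\sum_{i\in\Theta_m}c_i^{(m)}N_{m,i}^\ell\in S^{k_\ell}(\mathscr F_m^\ell)$; then (2) is immediate. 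Now $h_m$ coincides with $f$ off the union $Z_m:=\bigcup_{i\notin\Theta_m}\supp N_{m,i}^\ell$ of the discarded supports, and since every discarded B-spline protrudes from $O_m$ on the left or the right, $Z_m\cap O_m$ is confined to two ``bands'' near the two ends of $O_m$; I take $C_m$ to be a closed subinterval of $O_m$ on which $h_m\equiv f$, with endpoints as close as possible to those of $V_j^\ell$, which is feasible because the complement of those two bands inside $O_m$ is a middle interval on which $h_m=f$. The whole matter then reduces to choosing $\inf O_m$ and $\sup O_m$ so that $O_m\to V_j^\ell$ while the two bands shrink, for then $C_m\to V_j^\ell$ and $|O_m\setminus C_m|\to0$ follow.

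\emph{Choosing the endpoints of $O_m$.} Write $U_j^\ell=(\alpha,\beta)$ and treat the left endpoint $\alpha$; the right one is symmetric. Since $U_j^\ell$ is a connected component of the open set $U^\ell$, we have $\alpha\notin U^\ell$, i.e.\ $\alpha$ is an accumulation point of $\bigcup_n\Delta_n^\ell$, and two cases occur. If $\alpha\notin B_j^\ell$, there is a sequence $q_1>q_2>\cdots\downarrow\alpha$ with $q_i\in U_j^\ell\cap\bigcup_n\Delta_n^\ell$, say $q_i\in\Delta_{m_i}^\ell$; then for large $m$ the index $r(m):=\max\{\,i:m_i\le m\,\}$ tends to $\infty$, all of $q_1,\dots,q_{r(m)}$ lie in $\Delta_m^\ell$, and I set $\inf O_m:=q_{r(m)}$. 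A B-spline is supported on at most $k_\ell$ consecutive atoms of $\mathscr F_m^\ell$, so a discarded one meeting $O_m$ but protruding past $q_{r(m)}$ reaches at most $k_\ell$ atoms to the right of $q_{r(m)}$, hence not past the endpoint $q_{r(m)-k_\ell}$; thus the left band lies in $(\,q_{r(m)},q_{r(m)-k_\ell}\,]$, an interval contracting to the point $\alpha$. If instead $\alpha\in B_j^\ell$, there is $\delta_0>0$ with $(\alpha,\alpha+\delta_0)\cap\bigcup_n\Delta_n^\ell=\emptyset$ while points of $\bigcup_n\Delta_n^\ell$ still accumulate at $\alpha$ \emph{from the left}; the atom $A^{(m)}$ of $\mathscr F_m^\ell$ containing $(\alpha,\alpha+\delta_0)$ has width $\ge\delta_0$ but its left endpoint tends to $\alpha$, so one must not truncate at $\alpha$. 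Instead I would take $\inf O_m$ to be the left endpoint of the atom lying $k_\ell$ atoms to the left of $A^{(m)}$ (it exists for large $m$ and tends to $\alpha$ by the left-accumulation); then every B-spline whose support meets $A^{(m)}$ is contained in $O_m$, so $h_m=f$ on $A^{(m)}$, hence on a neighbourhood of $\alpha$ in $\bar I$, while the discarded B-splines protruding left of $O_m$ are supported to the left of $A^{(m)}$, so the left band lies in an interval ending at the left endpoint of $A^{(m)}$, i.e.\ in $(\alpha-\varepsilon,\alpha]$ once $m$ is large. In either case $\inf O_m\to\alpha$ and the left part of $\overline{O_m\setminus C_m}$ is an interval shrinking into $\{\alpha\}$, in particular of Lebesgue measure tending to $0$; the mirror construction at $\beta$ then yields (1), (3) and (4). (When $\alpha$ or $\beta$ is an endpoint of $\bar I$, no truncation is needed on that side.)

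\emph{Main obstacle.} The delicate point is reconciling (1) with (4): $O_m$ must converge to $V_j^\ell$ and yet $O_m\setminus C_m$ must become negligible, and these conflict at an endpoint $\alpha\notin B_j^\ell$, since the atoms of $\mathscr F_m^\ell$ just inside $O_m$ need not be small. The way out, as above, is to anchor $\inf O_m$ at one of the breakpoints $q_{r(m)}$ supplied by the accumulation hypothesis and to exploit that a B-spline spans only the \emph{fixed} number $k_\ell$ of atoms, so every discarded support meeting $O_m$ is trapped between $q_{r(m)}$ and $q_{r(m)-k_\ell}$, both of which run to $\alpha$. Dually, at $\alpha\in B_j^\ell$ there are no breakpoints just inside $U_j^\ell$ to anchor a truncation, which is exactly why $O_m$ must instead be widened slightly past $\alpha$ into the region where breakpoints do accumulate. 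The remaining steps — the exact bookkeeping of which B-splines are kept or discarded at each end (including the piecewise-constant case $k_\ell=1$ and boundary points of atoms), and the degenerate situations $\alpha\in\partial\bar I$ or $\beta\in\partial\bar I$ — are routine.
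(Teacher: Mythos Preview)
Your approach is essentially the same as the paper's: both expand $f$ in the B-spline basis of the finer space $S^{k_\ell}(\mathscr F_m^\ell)$ and define $h_m$ by retaining only a suitable subset of the B-splines, then use that each B-spline spans at most $k_\ell$ consecutive atoms to control the transition bands. The paper's selection rule is phrased intrinsically (keep $N_{m,r}^\ell$ whose support intersects $V_j^\ell$ and has positive distance to $\partial U_j^\ell\setminus B_j^\ell$) and lets $O_m,C_m$ emerge from that, whereas you first construct $O_m$ explicitly by a case distinction at each endpoint and then keep the B-splines with support inside $O_m$; the two criteria differ only in inessential ways near an endpoint $\alpha\in B_j^\ell$, and your more detailed endpoint analysis is correct.
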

	\begin{proof}
		Without loss of generality, assume that $f= N_{n,i}^\ell$ for
		some integer $i$. For $m\geq n$, we can write
		\[
			N_{n,i}^\ell = \sum_{r} \lambda_{m,r} N_{m,r}^\ell,
		\]
		where the absolute value of each coefficient
		$\lambda_{m,r}$ is $\leq 1$.  Set
		\[
			h_m = \sum_{r\in \Lambda_m} \lambda_{m,r} N_{m,r}^\ell,
		\]
		where the set $\Lambda_m$ is defined to contain precisely those indices
		$r$ so that the support of $N_{m,r}^\ell$ intersects $V_{j}^\ell$
		but the (Euclidean) distance between the support of $N_{m,r}^\ell$ and 
		$\partial U_{j}^\ell \setminus B_{j}^\ell$ is positive.
		The function $h_m$ is then contained in $S^{k_\ell}(\mathscr F_m^\ell)$
		and satisfies $|h_m|\leq 1$. With this setting, the support of $h_m$
		is contained in $O_m$ for some open interval $O_m$ and $h_m \equiv
		N_{n,i}^\ell$ on some closed interval $C_m\subset O_m$.
		Since the endpoints of $V_j^\ell$ are accumulation points
		of $\cup_n \Delta_n^\ell$ or endpoints of $I$, the intervals $O_m$ and $C_m$ can be chosen to satisfy
		items (1) and (4).
	\end{proof}

	Let now $(h_{j,m}^\ell)_m$ be the sequence of functions from
	Lemma~\ref{lem:restriction} corresponding to a
	function $f^\ell\in S^{k_\ell}(\mathscr F_{n_\ell}^\ell)$ for some
	positive integer $n_\ell$ and the set $V_j^\ell$.
	\begin{enumerate}
		\item If $E^\ell = V_{j_\ell}^\ell$, set $h_m^\ell =
			h_{j_\ell,m}^\ell$.
		\item If $E^\ell = (V_{j_\ell}^{\ell})^c$, set $h_m^\ell =
			f^\ell - h_{j_\ell,m}^\ell$.
		\item If $E^\ell = V^\ell$, set $h_m^\ell = \sum_{j=1}^m
			h_{j,K_m}^\ell$.
		\item If $E^\ell = (V^\ell)^c$, set $h_m^\ell = f^\ell -  \sum_{j=1}^m
			h_{j,K_m}^\ell$.
	\end{enumerate}
	Then, define $h_m = h_m^1\otimes \cdots \otimes h_m^d$.
 Since $\cup_{j\geq m} \overline{V_j^\ell}$ tends to the empty set
	as $m\to \infty$ for each $\ell$, and due to the properties guaranteed by
	Lemma~\ref{lem:restriction} of the functions $(h_m^\ell)$, if $K_m$ is
	chosen sufficiently large, $h_m\in S_{K_m}$ is Cauchy
	in the Banach space $W$ and its limit will be denoted by $(f^1
	I_{E^1}) \otimes \cdots \otimes (f^d I_{E^d})$.
	If $f^\ell = N_{n,i_\ell}^{\ell}$ is some B-spline function for all
	$\ell$ and some positive integer $n$, we will also write $N_{n,i}
	I_{F_\delta}$ for this limit in $W$, which (by \eqref{eq:T_mu}) satisfies
	\begin{equation}\label{eq:est_NI}
		\| T(N_{n,i} I_{F_\delta}) \| = \lim_m \| Th_m\| \leq \liminf_m
		\mu(\overline{\supp h_m}) \leq \mu\big( F_\delta \cap \overline{\supp
			N_{n,i}}\big).
	\end{equation}

	This construction allows us to decompose the martingale spline sequence $g_n$ into

	\[
		g_n =\sum_{\delta\in \{0,1\}^d} g_{n,\delta}, \qquad \text{with
		}
		g_{n,\delta} = \sum_i T(N_{n,i} I_{F_\delta})
		N_{n,i}^* \text{ for } \delta\in \{0,1\}^d.
	\]
	We treat the sequence $(g_{n,\delta})_n$ for each fixed $\delta\in \{0,1\}^d$
	separately.

	\medskip
	\textsc{Case 1: }We begin by considering the case where one of the first $s$ coordinates
	of $\delta$ equals one.  Without restriction assume that the first
	 coordinate of $\delta$ equals one. 
	 Write 
	 $N_{n,i}^* = N_{n,i_1}^{1*} \otimes N_{n,i_2}^{>1*}$, with
	 $i=(i_1,i_2)$ for an integer $i_1$ and a $(d-1)$-tuple of integers
	 $i_2$, thus $g_{n,\delta}$ can be written as 
	 \begin{equation}\label{eq:splitting_delta_prime_not_zero}
		 g_{n,\delta}(y_1,y_2) = \sum_{i_2} \Big(\sum_{i_1} T(
		 N_{n,i}I_{F_\delta})
		 N_{n,i_1}^{1*}(y_1) \Big)
		N_{n,i_2}^{>1*}(y_2), \qquad (y_1,y_2) \in F.
	\end{equation}
	Fix $y_1 \in U_{j_1}^1$ and  $t>0$.
	Let $\varepsilon>0$ and denote by $A_n^1(y_1)$ the atom in $\mathscr F_n^1$ that contains the
	point $y_1$. Then, $\beta:= \inf_n | A_n^1(y_1) | >0$ since $U_{j_1}^1$
	does not contain accumulation points of $\cup_n \Delta_n^1$.
	 Choose an open interval $O \supseteq V_{j_1}^1$ so that
		$\mu\big( (O\setminus V_{j_1}^1) \times \bar{I}^{d-1} \big) 
		\leq \varepsilon \mu(\bar I^d)$.
	 Then, choose $M$ sufficiently large so that for all $n\geq M$, we have 
		 $q^{d_n (A_n^1(y_1), B_n)} \leq \varepsilon$
	 for all atoms $B_n$ in $\mathscr F_n^1$ with $B_n\cap O^c\neq \emptyset$.
	 This is possible since the endpoints of $V_{j_1}^1$ are accumulation
	 points of $\cup_n \Delta_n^1$.
	 Split the sum over $i_1$ in
	 \eqref{eq:splitting_delta_prime_not_zero} into indices $i_1$ so that
	 $\overline{\supp N_{n,i_1}^1}\subseteq O$ and its complement and use the
	 geometric decay estimate \eqref{eq:mainestimate} for the dual B-splines
	 $N_{n,i_1}^{1*}$ and $N_{n,i_2}^{>1*}$ and estimate
	 \eqref{eq:est_NI}.
	 With the measures 
	 \[
		 \theta_1(A) = \frac{1}{\beta} \mu\big( (O\setminus V_{j_1}^1) \times A\big), 
		 \qquad \theta_2(A) = \frac{\varepsilon}{\beta} \mu\big( \bar I
		\times A\big)
	 \]
	satisfying $\max \{ \theta_1(\bar I^{d-1}), \theta_2(\bar I^{d-1}) \}
	 \leq \varepsilon \mu(\bar I^d)/\beta$
	 and the notation $\mathscr F_n^{>1} = \mathscr F_n^2\otimes \cdots \otimes
	 \mathscr F_n^d$, we then
	 obtain for 
	 $n\geq M$ 
	 \[
		 \| g_{n,\delta}(y_1,y_2) \| \leq C \sum_{\text{$A\in\mathcal A( \mathscr
			 F_n^{>1})$}} \big(b_n(q,\theta_1, A, y_2) +
		 b_n(q,\theta_2,A,y_2)\big),
	 \]
	 where the expressions $b_n(q,\theta,A,y_2)$ are defined as in
	 \eqref{eq:defbn}, but with $\theta(A)$ replaced by $\theta(\overline A)$.
	Here and in the following, the letter $C$ denotes a
	constant that depends only on $k,d,q$ and that may change from line to line.
	 Then, applying Corollary~\ref{cor:limsup} (using also the remark
	 succeeding it) in dimension $d-1$ with
	 $B=D=\bar I^{d-1}$, we estimate
	 \begin{align*}
		| \{ y_2 : \limsup_n \| g_{n,\delta} (y_1,y_2) \| >t \}|
		&\leq C \frac{ \varepsilon \mu(\bar I^d)}{t \beta}.
	\end{align*}
	We have this inequality for any $\varepsilon>0$, which implies 
	$| \{ y_2 : \limsup_n \| g_{n,\delta} (y_1,y_2) \| >t \}| =0$. 
	As this is true for any $t>0$ and any $y_1\in U_{j_1}^1$, we
	get $g_{n,\delta} \to 0$ almost everywhere on $F$.
	
	\medskip
	\textsc{Case 2:} Next, consider the case where $\delta\neq 0$ but the first $s$
	coordinates of $\delta$ equal $0$.
	  Write $N_{n,i}^* = N_{n,i_1}^{\leq s*} \otimes
	 N_{n,i_2}^{>s*}$ where
	 $i=(i_1,i_2)$ for an $s$-tuple of integers $i_1$ and a $(d-s)$-tuple of integers
	 $i_2$, thus, $g_{n,\delta}$ can be written as 
	 \begin{align*}
		 g_{n,\delta} (y_1,y_2)  &= \sum_{i_2} \Big( \sum_{i_1}
		 T( N_{n,i} I_{F_\delta})  N_{n,i_1}^{\leq s*}(y_1)\Big)
		N_{n,i_2}^{>s*}(y_2), \qquad (y_1,y_2)\in F.
	\end{align*}
	Denote by $A_m^{\leq s}(y_1)$ 
	the atom $A$ in $\mathscr F_m^1 \otimes \cdots \otimes \mathscr F_m^s$
	with $y_1\in A$.
	If we fix $y_1 \in U_{j_1}^1\times \cdots \times
	 U_{j_s}^s$, we know that 
	$\beta := \inf_m |A_m^{\leq s}(y_1)|>0$.
	Next, define $Y= F^{s+1} \times \cdots \times F^d$ and $Z = E^{s+1}
	\times \cdots \times E^d$. Moreover, let $\mathscr F_m^{>s}= \mathscr
	F_m^{s+1} \otimes \cdots \otimes \mathscr F_m^d$ and define the measure 
	$\theta(A) = \mu(\bar{I}^s \times (A\cap Z))$.
	 Observe that
	$\theta(Y)=0$, since $E^\ell \cap F^\ell = \emptyset$ for some $\ell >s$
	by the form of $\delta$.
	Using estimate \eqref{eq:mainestimate} for the dual B-spline functions
	and
	estimate \eqref{eq:est_NI} bounding the operator $T$ in terms of $\mu$, 
	\begin{align*}
		\| g_{n,\delta}(y_1,y_2) \| &\leq \frac{C}{\beta} \sum_{\text{$A\in\mathcal A(\mathscr F_n^{>s})$}} b_n(q,\theta,A,y_2)
	\end{align*}
	 where the expression $b_n(q,\theta,A,y_2)$ is defined as in
	 \eqref{eq:defbn}, but with $\theta(A)$ replaced with
	 $\theta(\overline{A})$. 
	Approximate $Y$ by a sequence of sets $Y_m \in \mathscr F_m^{>s}$ with
	$Y_m \to Y$. Then, for each $\varepsilon>0$, there exists a positive
	integer $m(\varepsilon)$ with $| Y \setminus Y_{m(\varepsilon)} | \leq
	\varepsilon$ and $\theta(Y_{m(\varepsilon)}) \leq \varepsilon$.
	For $t>0$, apply Lemma~\ref{lem:limsup} (and the remark
	succeeding it) in
	dimension $d-s$ with
	$D=Y_{m(\varepsilon)}$ to deduce
	\begin{equation}\label{eq:Lt1}
		|L_t \cap Y| \leq | Y_{m(\varepsilon)}\cap L_t \cap Y| + |
		Y\setminus Y_{m(\varepsilon)}| \leq \frac{C\varepsilon}{t} +
		\varepsilon
	\end{equation}
	with
	\[
		L_t =  \Big\{ y_2 \in I^{d-s} : \limsup_n \sum_{\text{ $A\in\mathcal A(\mathscr F_n^{>s})$}} b_n(q,\theta,A,y_2) > t\Big\}.
	\]
	Since \eqref{eq:Lt1} holds for any $\varepsilon>0$, we obtain $|L_t\cap
	Y| = 0$ for any $t>0$, which gives that for any fixed $y_1$,
	$g_{n,\delta}(y_1,y_2)$ converges to $0$ a.e. in $y_2\in Y$.
	Summarizing and combining this with Case~1 for $\delta$,
	we have $g_{n,\delta} \to 0$ a.e. on $F$ as $n\to\infty$ if
	one of the coordinates of $\delta$ equals $1$.

	\medskip
	\textsc{Case 3: }It remains to consider the case $g_{n,0}$, i.e. the choice
	$\delta=0$. \\
	For each $\ell \leq s$, the B-splines $(N_{n,r}^\ell)_r$ whose supports
	intersect $V_{j_\ell}^\ell$  can be indexed in such a way that for each
	fixed $r$, the function $N_{n,r}^\ell \charfun_{V_{j_\ell}^\ell}$
	converges uniformly to a function $\bar N_r^\ell$ as $n\to\infty$ (cf. \cite[Section
	4]{MuellerPassenbrunner2020}). This is the case since the interior of
	$V_{j_\ell}^\ell$ does not contain any accumulation points of $\cup_n
	\Delta_n^\ell$. Depending on whether the endpoints of $V_{j_\ell}^\ell$
	can be approximated from inside of $V_{j_\ell}^\ell$ by points in $\cup_n \Delta_n^\ell$,
	there are different possibilities for the index set $\Lambda^\ell$ of the
	functions $(\bar N_r^\ell)_{r\in\Lambda^\ell}$. It can either be finite,
	infinite on one side or bi-infinite.

	We have the following biorthogonal functions to $(\bar
	N_r^\ell)_r$ that admit the same geometric decay estimate
	\eqref{eq:mainestimate} than the dual
	B-spline functions $N_{n,r}^{\ell*}$. This result is similar to
	\cite[Lemma~4.2]{MuellerPassenbrunner2020}.

		\begin{lem}\label{lem:limit_dual_bspline}
		Let $\ell\in\{1,\ldots,d\}$. For each $r\in \Lambda^\ell$,
		the sequence $N_{n,r}^{\ell*}$
		converges uniformly on each atom of $\mathscr A^\ell=\cup_n \mathscr F_n^\ell$
		contained in $V_{j_\ell}^\ell$ to some
		function $\bar N_{r}^{\ell*}$ satisfying the estimate
		\begin{equation}\label{eq:dualbarestimate}
					| \bar N_r^{\ell *} (y)| \leq
					C \frac{q^{d(A(y),E_r )}} {| \conv
					(A(y)\cup E_r) |},\qquad y\in
					U_{j_\ell}^\ell,
				\end{equation}
				denoting by $A(y)$ the atom of 
				$\mathscr A^\ell$ that contains the point $y$,
				by $E_r$ the support of $\bar N_r^\ell$
				and by $d(A(y),E_r)$ the number of atoms in
				$\mathscr A^\ell$ between $A(y)$ and
				$E_r$.
	\end{lem}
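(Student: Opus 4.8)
The plan is to reduce everything to the one-dimensional situation and to exploit the already-established geometric decay estimate \eqref{eq:mainestimate} together with a compactness/Cauchy argument. First I would fix $\ell$ and drop it from the notation, working entirely inside a fixed interval $V_j^\ell$ (so really inside $I$ with the filtration $(\mathscr F_n^\ell)$). The starting point is the observation already recorded before the lemma: the interior of $V_j^\ell$ contains no accumulation points of $\cup_n \Delta_n^\ell$, hence on $V_j^\ell$ the filtration $(\mathscr F_n^\ell)$ stabilises in the sense that each fixed atom of $\mathscr A^\ell=\cup_n\mathscr F_n^\ell$ lying in $V_j^\ell$ is an atom of $\mathscr F_n^\ell$ for all large $n$, and the atoms of $\mathscr F_n^\ell$ meeting $V_j^\ell$ form, for large $n$, a fixed (possibly finite, half-infinite, or bi-infinite) list on which the B-splines $N_{n,r}^\ell\charfun_{V_j^\ell}$ converge uniformly to $\bar N_r^\ell$. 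So the local spline spaces $S^{k_\ell}(\mathscr F_n^\ell)$ restricted to any fixed atom $A\subset V_j^\ell$ stabilise to a fixed finite-dimensional polynomial space, and $(\bar N_r^\ell)_{r\in\Lambda^\ell}$ is a B-spline-type basis of the limiting local spline space on $V_j^\ell$.

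Next I would address convergence of the dual functions $N_{n,r}^{\ell *}$. Fix an atom $A$ of $\mathscr A^\ell$ with $A\subset V_j^\ell$ and fix $r\in\Lambda^\ell$. For $n$ large, $A$ is an atom of $\mathscr F_n^\ell$; restrict the biorthogonality relations $\int N_{n,i}^\ell N_{n,r}^{\ell *}\,\dif\lambda = \delta_{i,r}$ to the relevant block. The key point is the decay estimate \eqref{eq:mainestimate}: $|N_{n,r}^{\ell*}(y)|\le C q^{d_n(E_{n,r},A_n(y))}/|\conv(E_{n,r}\cup A_n(y))|$, which is uniform in $n$. In particular, on the fixed atom $A$, the functions $N_{n,r}^{\ell*}\charfun_A$ are polynomials of degree $<k_\ell$ with coefficients bounded uniformly in $n$ (bounding, say, by evaluating at fixed points of $A$ and using that $|A|$ is bounded below on $V_j^\ell$). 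By finite-dimensionality and a diagonal argument over the countably many atoms $A\subset V_j^\ell$, some subsequence of $N_{n,r}^{\ell*}$ converges uniformly on each such $A$ to a limit $\bar N_r^{\ell*}$. To upgrade this to convergence of the whole sequence I would argue that the limit is forced: passing to the limit in the biorthogonality relations (which is legitimate because on each atom only boundedly many B-splines are nonzero and these converge uniformly) shows $\int \bar N_i^\ell\,\bar N_r^{\ell*}\,\dif\lambda=\delta_{i,r}$ and $\bar N_r^{\ell*}$ lies in the limiting local spline space; since the biorthogonal system to a basis of a finite-dimensional (per atom, with the global compatibility/continuity constraints) space is unique, every subsequential limit coincides, giving convergence of the full sequence.

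Finally, the estimate \eqref{eq:dualbarestimate} follows by taking $n\to\infty$ in \eqref{eq:mainestimate}: for $y\in U_{j_\ell}^\ell$ and $n$ large, $A_n(y)$ equals the fixed atom $A(y)$ of $\mathscr A^\ell$ containing $y$, $E_{n,r}$ stabilises to $E_r=\supp\bar N_r^\ell$, and $d_n(E_{n,r},A_n(y))$ stabilises to $d(A(y),E_r)$ (the number of atoms of $\mathscr A^\ell$ between them), while $|\conv(E_{n,r}\cup A_n(y))|\to|\conv(E_r\cup A(y))|$; so the uniform bound passes to the limit. I expect the main obstacle to be the bookkeeping when an endpoint of $V_{j_\ell}^\ell$ \emph{can} be approached from inside by points of $\cup_n\Delta_n^\ell$ versus when it cannot — this is exactly what distinguishes the finite, half-infinite and bi-infinite shapes of $\Lambda^\ell$ and determines which B-splines survive in the limit and how the "first" and "last" limiting B-splines are normalised near the boundary; getting the index set $\Lambda^\ell$ and the stabilisation of $d_n$ right near $\partial V_{j_\ell}^\ell$ is the delicate part, and here I would lean directly on the analogous one-dimensional treatment in \cite[Section~4 and Lemma~4.2]{MuellerPassenbrunner2020}.
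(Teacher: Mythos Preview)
Your approach is genuinely different from the paper's. The paper gives a direct Cauchy argument: for $n\ge m$ large it expands $N_{m,r}^{\ell*}=\sum_\nu \alpha_{r\nu} N_{n,\nu}^{\ell*}$, shows that the coefficients $\alpha_{r\nu}$ are uniformly bounded, and then proves $|\alpha_{r\nu}-\delta_{r\nu}|\le C\varepsilon\gamma/L$ for $|\nu-r|\le 2L$ by writing $\alpha_{r\nu}-\delta_{r\nu}=-\int N_{m,r}^{\ell*}(N_{m,\nu}^\ell-N_{n,\nu}^\ell)\,d\lambda$ and splitting this integral into the parts over $U_{j_\ell}^\ell$, over a thin collar $O\setminus U_{j_\ell}^\ell$, and over $O^c$, each handled by a different mechanism (uniform convergence of $N_{n,\nu}^\ell$, smallness of the collar, and the geometric decay \eqref{eq:mainestimate}, respectively). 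The far tail $|\nu-r|>2L$ is again controlled by \eqref{eq:mainestimate}. This yields $|N_{m,r}^{\ell*}(y)-N_{n,r}^{\ell*}(y)|\le C\varepsilon$ directly, with no subsequence extraction and no uniqueness argument.

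Your compactness-plus-uniqueness route is reasonable in spirit, but the uniqueness step as written has a gap. When $\Lambda^\ell$ is infinite (the half-infinite or bi-infinite cases you mention), the limiting spline space on $V_{j_\ell}^\ell$ is infinite-dimensional, and the assertion ``the biorthogonal system to a basis is unique'' is not automatic: it requires knowing that the subsequential limit $\bar N_r^{\ell*}$ lies in the closed linear span of $(\bar N_i^\ell)_{i\in\Lambda^\ell}$, which you have not established. A subsequential limit that is piecewise polynomial with $C^{k_\ell-2}$ regularity and satisfies the decay bound is certainly a spline on the limiting grid, but showing it belongs to the closed span of the limit B-splines is itself a completeness statement that needs proof. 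Moreover, passing to the limit in $\int_I N_{n,i}^\ell N_{n,r}^{\ell*}\,d\lambda=\delta_{i,r}$ requires controlling the contribution from outside $V_{j_\ell}^\ell$, and while this does follow from the decay estimate together with the accumulation of knots at $\partial V_{j_\ell}^\ell$, it is exactly this kind of tail estimate that the paper carries out explicitly in its Cauchy argument; you are not avoiding that work, only relocating it. The paper's approach buys robustness precisely by sidestepping any uniqueness claim in an infinite-dimensional space. Your final paragraph, obtaining \eqref{eq:dualbarestimate} by letting $n\to\infty$ in \eqref{eq:mainestimate}, is correct and matches the paper.
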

	\begin{proof}
		Fix the index $r\in\Lambda^\ell$, the point $y\in U_{j_\ell}^\ell$ and
		$\varepsilon>0$.
		Since
		$r\in\Lambda^\ell$ is fixed, the support $E_{n,r}$ of
		$N_{n,r}^\ell$ intersects $U_{j_\ell}^\ell$ for some
		index $n$ and we know that $\beta = \inf_m |E_{m,r}| > 0$.
		Additionally, set $\gamma = |A(y)|$. Without restriction, we
		assume that $\beta,\gamma\leq 1$.
		Next, we choose $L$ sufficiently large so that $L q^{L} \leq \varepsilon
		\beta\gamma$ and, for any positive integer $n$, $d_n(A_n(y),
		E_{n,r})\leq L$. Moreover, choose an open interval $O\supseteq
		V_{j_\ell}^\ell$ satisfying $|O\setminus U_{j_\ell}^\ell| \leq \varepsilon
		\beta\gamma/L$.
		Based on that, choose
		$M$ sufficiently large so that each of the intervals $(\inf O, y)$ and
		$(y,\sup O)$ contains at least $L$ points of $\Delta_M^\ell$ and
		so that, for indices $\nu$ with $|\nu-r|\leq 2L$ we have
		\begin{equation}\label{eq:infty}
			\| N_{n,\nu}^\ell - N_{m,\nu}^\ell
			\|_{L^\infty(U_{j_\ell}^\ell)} \leq \varepsilon
			\beta\gamma/L,\qquad m,n\geq M.
		\end{equation}
		For $n\geq m \geq M$, expand the
		function $N_{m,r}^{\ell*}$ in the basis $(N_{n,\nu}^{\ell*})_\nu$ as
		\begin{equation}
			N_{m,r}^{\ell*} = \sum_\nu \alpha_{r\nu} N_{n,\nu}^{\ell*}.
		\end{equation}
		The coefficients $\alpha_{r\nu}$ are bounded by a constant
		independently of $r,\nu$ and $m,n$ as we will now see. To this end,
		we use the geometric decay inequality \eqref{eq:mainestimate}
		for the dual B-spline functions $N_{m,r}^{\ell*}$ to obtain 
		\begin{align*}
			|\alpha_{r\nu}| &= \Big| \int_I N_{m,r}^{\ell*}
			N_{n,\nu}^{\ell}\dif\lambda\Big|  
			\leq C\sum_{ \text{ $A\in\mathcal A(\mathscr F_m^\ell)$} }
			\frac{q^{d_m(A,E_{m,r})}}{ | \conv
				(A \cup E_{m,r})|} \int_A 
			N_{n,\nu}^\ell\dif\lambda  \\
			&\leq C\sum_{ \text{ $A\in\mathcal A(\mathscr F_m^\ell)$}}
			\frac{q^{d_m(A,E_{m,r})}}{ | \conv
				(A\cup E_{m,r})|} |A| 
			\leq C\sum_{ \text{ $A\in\mathcal A(\mathscr F_m^\ell$)}}
			q^{d_m(A,E_{m,r})}
			\leq C.
		\end{align*}
		Denoting $f_\nu = N_{m,\nu}^\ell - N_{n,\nu}^\ell$, whose
		absolute value is bounded by $1$,
		\begin{align*}
			\delta_{r\nu} &= \int_I N_{m,r}^{\ell*}
			N_{m,\nu}^{\ell}\dif\lambda = 
			\int_I
			N_{m,r}^{\ell*} N_{n,\nu}^\ell \dif\lambda + \int_I
			 N_{m,r}^{\ell*} f_\nu\dif\lambda 
			 = \alpha_{r\nu} + \int_{I}
			 N_{n,r}^{\ell*} f_\nu\dif\lambda.
		\end{align*}
		For indices $\nu$ with $|\nu - r|\leq 2L$, we now estimate this
		last integral, by decomposing it into the integrals
		$I_1,I_2,I_3$ over
		$U_{j_\ell}^\ell$, $O\setminus U_{j_\ell}^\ell$ and $O^c$, respectively.
		By estimate \eqref{eq:infty} and the fact that
		the integral of $N_{n,r}^{\ell*}$ is smaller than a constant $C$
		by \eqref{eq:mainestimate}, the integral $|I_1|$ can be bounded by
		$C\varepsilon\beta\gamma/L$. For the second integral, we use
		the fact that the integrand is bounded by $C /\beta$ and the
		measure estimate for $O\setminus U_{j_\ell}^\ell$ to deduce
		$|I_2|\leq C \varepsilon\gamma / L$.
		For the remaining integral $I_3$, we note that on $O^c$,
		the function $N_{n,r}^{\ell*}$ is bounded by $C q^L/\beta$,
		which, together with estimate
		\eqref{eq:mainestimate} and the choice of $L$ gives $|I_3|\leq
		C\varepsilon\gamma/L$.

		Summarizing, 
		\[
			|\alpha_{r\nu} - \delta_{r\nu}|\leq C\varepsilon\gamma/L,\qquad
			|\nu-r|\leq 2L.
		\]
		This can be used to estimate the difference between
		$N_{m,r}^{\ell*}$ and $N_{n,r}^{\ell*}$ for $n\geq m\geq M$ pointwise as follows
		\begin{align*}
			| N_{m,r}^{\ell*}(y) - N_{n,r}^{\ell*}(y) | &=
			\Big|\sum_{\nu} (\alpha_{r\nu} - \delta_{r\nu}) N_{n,\nu}^{\ell*}(y)\Big|
			\leq C \varepsilon + \sum_{\nu : |\nu-r| >2L}
			|\alpha_{r\nu}
			N_{n,\nu}^{\ell*}(y) |,
		\end{align*}
		by using the bound $|N_{n,\nu}^{\ell*}(y)| \leq C/\gamma$.
		By the choice of $L$, the inequality $|\nu - r| > 2L$ yields 
		$d_n(A_n(y), E_{n,\nu})>L$ and thus,
		the geometric decay estimate for $N_{n,\nu}^{\ell*}$, the
		boundedness of $\alpha_{r\nu}$ and the choice of $L$ implies
		that the latter sum is bounded by $C\varepsilon$. This, in turn, leads to the estimate
		$| N_{m,r}^{\ell*}(y) - N_{n,r}^{\ell*}(y)| \leq C\varepsilon$
		and thus the convergence of $N_{n,r}^{\ell*}(y)$,
		which is uniform in $A(y)$ since all the estimates
		above only depend on $A(y)$ and not on the particular point $y$.
		Now, estimate \eqref{eq:dualbarestimate} follows from the corresponding
		estimate of $N_{n,r}^{\ell*}$ by letting $n\to\infty$.
	\end{proof}
	
	Write $F = Z\times Y$ with $Z = F^1 \times \cdots \times F^s = V_{j_1}^1
	\times \cdots \times V_{j_s}^s$ and
	$Y= F^{s+1}\times \cdots \times F^d$. 
	For an $s$-tuple of integers $i_1 = (r_1,\ldots,r_s)$ and a
	$(d-s)$-tuple of integers $i_2 = (r_{s+1},\ldots,r_d)$, set
	$N_{m,i_1}^{\leq s} I_Z = N_{m,r_1}^1 I_{F^1} \otimes \cdots \otimes
	N_{m,r_s}^s I_{F^s}$ and $N_{n,i_2}^{>s}I_Y = N_{n,r_{s+1}}^{s+1}
	I_{F^{s+1}} \otimes \cdots \otimes N_{n,r_d}^d I_{F^d}$.
	The uniform convergence of $N_{m,r_\ell}^{\ell}\charfun_{V_{j_\ell}^\ell}$ to
	$\bar N_{r_\ell}^\ell$ for $\ell\leq s$ as $m\to \infty$ implies that for fixed $n$ and $i_1$, the sequence
		$(N_{m,i_1}^{\leq s} I_Z \otimes N_{n,i_2}^{>s} I_Y)$
	converges in $W$ to some element as $m\to \infty$, which we denote by
	$\bar N_{i_1}^{\leq s} \otimes N_{n,i_2}^{>s} I_Y$.
	By the continuity of $T$, we also have $T( N_{m,i_1}^{\leq s}I_Z \otimes N_{n,i_2}^{>s} I_Y) \to T(\bar
		N_{i_1}^{\leq s} \otimes N_{n,i_2}^{>s} I_Y)$
	in $X$ as $m\to \infty$.
	Using the expressions $T(\bar
		N_{i_1}^{\leq s} \otimes N_{n,i_2}^{>s} I_Y)$ and the dual
		functions $\bar N_{i_1}^{\leq s*} = \bar N_{r_1}^{1*} \otimes
		\cdots\otimes \bar N_{r_s}^{s*}$ to $\bar
		N_{i_1}^{\leq s}$ given by
		Lemma~\ref{lem:limit_dual_bspline},
	define
	\begin{equation}\label{eq:un}
		u_{n} = \sum_{i_1,i_2} T (
		\bar N_{i_1}^{\leq s} \otimes N_{n,i_2}^{>s} I_{Y} ) (\bar
		N_{i_1}^{\leq s*} \otimes
		N_{n,i_2}^{>s*}).
	\end{equation}
	Next, we show that the sequence $(g_{n,0})$ and the sequence $(u_n)$ have
	the same a.e. limit on $F$. Indeed, for fixed $y_1 \in U_{j_1}^1\times \cdots
	\times U_{j_s}^s$, the difference of those two functions has
	the form
	\begin{align*}
		g_{n,0}(y_1,\cdot) - u_n(y_1,\cdot) &= \sum_{i_2} N_{n,i_2}^{>s*}
		\Big[ \sum_{i_1} T \big( ( N_{n,i_1}^{\leq s} I_Z - \bar
			N_{i_1}^{\leq s} ) \otimes N_{n,i_2}^{>s} I_Y\big)
		N_{n,i_1}^{\leq s*}(y_1) \\
		& \qquad +	\sum_{i_1} T(\bar N_{i_1}^{\leq s}
	\otimes N_{n,i_2}^{>s}I_Y) \big(N_{n,i_1}^{\leq s*}(y_1) - \bar
N_{i_1}^{\leq s} (y_1) \big) \Big].
	\end{align*}
	Denote $\mathscr F_n^{>s} = \mathscr F_n^{s+1}\otimes \cdots \otimes
	\mathscr F_n^d$.
	Using \eqref{eq:mainestimate} for $N_{n,i_2}^{>s*}$ and $N_{n,i_1}^{\leq
	s*}$,
	Lemma~\ref{lem:limit_dual_bspline}, 
	the uniform boundedness and the localized support of $\bar
	N_{i_1}^{\leq s}$, and the bound \eqref{eq:est_NI} of the operator $T$ in terms of the
	measure $\mu$, we obtain for all $\varepsilon>0$ an 
	index $M$ so that for $n\geq M$
	\begin{equation}\label{eq:diff}
		\| g_{n,0}(y_1,y_2) - u_n(y_1,y_2) \| \leq
		\sum_{\text{$A\in\mathcal A(\mathscr F_n^{>s})$}} b_n(q,\theta,A,y_2),  
	\end{equation}
	where $\theta$ is the measure given by $\theta(A) =
	\varepsilon\mu\big(\bar I^s \times
	(A \cap Y)\big)$ and the expression $b_n(q,\theta,A,y_2)$ is defined as
	in \eqref{eq:defbn}, but with $\theta(A)$ replaced with
	$\theta(\overline{A})$.
	By Corollary~\ref{cor:limsup} (and the remark succeeding it) with
	$B=D=\bar I^{d-s}$ we obtain
	$|L_t| \leq C\theta(\bar I^{d-s})/t \leq C\varepsilon \mu(\bar I^d)/t$
	with
	\[
		L_t = \{ y_2 \in I^{d-s} : \limsup_n \sum_{\text{$A\in\mathcal A(\mathscr F_n^{>s})$}} b_n(q,\theta,A,y_2)>t \}.
	\]
	This implies, using also \eqref{eq:diff}, 
	\[
		| \{ y_2\in I^{d-s} : \limsup_n \| g_{n,0}(y_1,y_2) -
	u_n(y_1,y_2) \| > t\} | = 0
	\]
	for any $t>0$, i.e., $g_{n,0}$ and $u_n$ have the same a.e. limit on $F$.

	Therefore, in order to identify the a.e. limit of $(g_{n,0})$ on $F$ (which, by
	Cases $1$ and $2$, is also the a.e. limit of $(g_n)$), we
	identify the a.e. limit of $(u_n)$.
	Similar to Part II in the proof of Theorem~\ref{thm:conv}, we want to
	construct, for each $i_1$, a vector measure $\nu_{i_1}$ on $\mathscr
	A^{>s} = \cup_n \mathscr F_n^{>s}$
	based on the expressions  $T(\bar N_{i_1}^{\leq s} \otimes
	N_{n,i_2}^{>s} I_Y)$.
	The aim is to have, for each B-spline function $N_{n,i_2}^{>s}$, the
	representation
	\begin{equation}\label{eq:measure_T}
		\int N_{n,i_2}^{>s} \dif\nu_{i_1} = T(\bar N_{i_1}^{\leq s}
		\otimes N_{n,i_2}^{>s} I_Y).
	\end{equation}
To this end, let $Q = \prod_{\ell=s+1}^d (a_\ell,
b_\ell]$ be an atom of the $\sigma$-algebra
	$\mathscr F_n^{>s}$ for some positive integer $n$.
For the definition of the measure $\nu_{i_1}(Q)$, we approximate the characteristic
function $\charfun_{Q}$ of $Q$ by spline functions $f_m^{s+1} \otimes \cdots \otimes f_m^d$
contained in $\cup_j
\big(S^{k_{s+1}}(\mathscr F_j^{s+1}) \otimes \cdots \otimes S^{k_d}(\mathscr
F_j^d)\big)$, which will be done as follows.
	Let $\ell\in \{s+1,\ldots,d\}$. If the order
	of the polynomials $k_\ell$ in direction $\ell$ equals $1$ (piecewise
	constant case), we set $f_m^\ell =
	\charfun_{(a_\ell,b_\ell]}$ for $m\geq n$, which satisfies $f_m^\ell \in
	S^{k_\ell}(\mathscr F_m^\ell)$.
	If $k_\ell > 1$, we apply the following construction of the
	approximation $f_m^\ell$ of the characteristic function of the interval
$(a_\ell,b_\ell]$. 

If $a_\ell$ is contained in the countable set $\cup_j \partial U_j^\ell$ and if
$a_\ell$ is not an endpoint of $I$, we choose $c \in (a_\ell, a_\ell+1/m)$ that is
not contained in $\cup_j \partial U_j^\ell$. Otherwise, set $c=a_\ell$.
Similarly, if $b_\ell$ is contained
in $\cup_j \partial U_j^\ell$ and if $b_\ell$ is not an endpoint of $I$, we
choose $d\in (b_\ell,b_\ell+1/m)$ that is not contained in $\cup_j \partial
U_j^\ell$. Otherwise, set $d=b_\ell$.
	Put 
	\[
		J(x) = \begin{cases}
			V_j^\ell, &\text{if }x\in U_j^\ell, \\
			\emptyset, &\text{otherwise,}
		\end{cases}
	\]
	and define the interval
	\[
	J = \Big((c,d] \setminus J(c) \Big) \cup J(d),
	\]
which has the property that $J\cap (V^\ell)^c = (c,d] \cap
(V^\ell)^c$.
We then choose a closed interval $C$ and an open interval $O$ (both in $I$) with
$C\subseteq
J\subseteq O$ and the property $|O\setminus C| \leq 1/m$.
The sets $C$ and $O$ are chosen so that as many endpoints of $C$
and $O$ coincide with the corresponding endpoints of $(c,d]$ as
possible.
Then, let $f_m^\ell\in \cup_j S^{k_\ell}(\mathscr F_j^\ell)$ be a non-negative function that is bounded by
	$1$ and satisfies
	\[
		\supp f_{m}^\ell \subseteq O \qquad \text{and}\qquad f_m^\ell
		\equiv 1 \text{ on } C.
	\]
	This is possible since if $c$ or $d$ are endpoints of $J$,
	they are contained in $\big(\cup_j \overline{U_j^\ell})^c$ and thus 
	can be approximated from both sides with grid points $\cup_j
	\Delta_j^\ell$. Otherwise, the endpoints of $J$ are also endpoints of
	some set $V_j^\ell$, which are accumulation points of
	$\cup_j\Delta_j^\ell$ as well.

Then, define $f_m = f_m^{s+1} \otimes \cdots \otimes f_m^d$ which gives, for each
index $i_1$,
a Cauchy sequence $\bar N_{i_1}^{\leq s} \otimes f_m I_Y$ in $W$. Its limit
will be written as $\bar N_{i_1}^{\leq s}\otimes  (I_Q\cdot I_Y)$.
Then, continuing in a similar fashion as in Part~II of the proof of
Theorem~\ref{thm:conv}, we  make sense of the expression $T\big(\bar
N_{i_1}^{\leq s} \otimes (I_A \cdot I_Y)\big)$ for any $A\in\mathscr A^{>s}$
and define the measure $\nu_{i_1}(A) = T\big(\bar
N_{i_1}^{\leq s} \otimes (I_A \cdot I_Y)\big)$ for $A\in \mathscr A^{> s}$ whose
total variation satisfies
$|\nu_{i_1}| (I^{d-s})  \leq 2^{d-s} \mu(\overline{\supp \bar
			N_{i_1}^{\leq s}}
		\times Y)$.
Additionally, for any B-spline function $N_{n,i_2}^{>s}$, we have equation
\eqref{eq:measure_T}.
	Now, as in Part III of the proof of Theorem~\ref{thm:conv}, denoting by
	$w_{i_1}$ the Radon-Nikod\'{y}m density of the absolutely continuous
	part of $\nu_{i_1}$ with respect to Lebesgue measure $\lambda^{d-s}$, 
	\[
		u_n(y_1,y_2) = \sum_{i_1}\bar N_{i_1}^{\leq s*}(y_1) (P_n^{>s}
		\nu_{i_1})(y_2)\to g(y_1,y_2) := \sum_{i_1}\bar N_{i_1}^{\leq
		s*}(y_1) w_{i_1}(y_2)
	 \]
	 as $n\to\infty$ for almost every $(y_1,y_2)\in F$. Using the estimate
	 from Lemma~\ref{lem:limit_dual_bspline} for $\bar N_{i_1}^{\leq s*}$
	 and the above estimate for the total variation of the measures
	 $\nu_{i_1}$, we obtain that $\| g \|_{L^1_X(F)} \leq C\cdot \mu(F)$.

	Thus, we have proven the following theorem:
\begin{thm}\label{thm:conv_general}
Let $(\mathscr F_n)$ be an interval filtration on $I^d$ 
	and let
	$X$ be a Banach space with RNP. 
	Let $(g_n)$ be an $X$-valued martingale spline sequence adapted to
	$(\mathscr F_n)$ 
	with $\sup_n \|g_n\|_{L^1_X} < \infty$.

	Then, there exists $g\in L^1_X(I^d)$ so that $g_n \to g $ almost
	everywhere with respect to Lebesgue measure $\lambda^d$.
\end{thm}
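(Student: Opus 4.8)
The plan is to assemble the constructions made in this section into one localised argument. First I would observe that Part~I of the proof of Theorem~\ref{thm:conv} nowhere uses density of $(\mathscr F_n)$, so it already supplies the operator $T : \cup_n S_n\to X$, its continuous extension to the completion $W$, the finite measure $\mu$ on $\bar I^d$ obeying \eqref{eq:T_mu}, and the representation \eqref{eq:rep_gn_T} of $g_n$. For each coordinate $\ell=1,\dots,d$ we have $\bar I=(V^\ell)^c\cup\bigcup_j V_j^\ell$, and forming products over $\ell$ covers $\bar I^d\supseteq I^d$ by countably many rectangles $F=F^1\times\cdots\times F^d$---each factor $F^\ell$ being either $(V^\ell)^c$ or some $V_{j_\ell}^\ell$---with at most $2^d$-fold overlap. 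It therefore suffices to show that $g_n$ converges a.e. on each such fixed $F$ to a function $g$ satisfying $\|g\|_{L^1_X(F)}\leq C\mu(F)$; summing over the rectangles $F$ then produces the a.e. limit on $I^d$ together with $\|g\|_{L^1_X(I^d)}\leq 2^d C\mu(\bar I^d)<\infty$.

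Fix such an $F$ and reorder coordinates so that $F^\ell=V_{j_\ell}^\ell$ for $\ell\leq s$ and $F^\ell=(V^\ell)^c$ for $\ell>s$. Building the elements $N_{n,i}I_{F_\delta}\in W$ via Lemma~\ref{lem:restriction} (with the bound \eqref{eq:est_NI}), I would split $g_n=\sum_{\delta\in\{0,1\}^d}g_{n,\delta}$ with $g_{n,\delta}=\sum_i T(N_{n,i}I_{F_\delta})N_{n,i}^*$. For $\delta\neq0$ the two cases above give $g_{n,\delta}\to0$ a.e. on $F$: after freezing the ``dense'' coordinates $y_1$ (so that $\beta=\inf_m|A_m^{\leq s}(y_1)|>0$), the geometric decay estimate \eqref{eq:mainestimate} for the dual B-splines together with \eqref{eq:est_NI} majorise $\|g_{n,\delta}(y_1,\cdot)\|$ by a sum of the $b_n$-type quantities from \eqref{eq:defbn} against a measure of total mass $O(\varepsilon)$, and then Corollary~\ref{cor:limsup}, respectively Lemma~\ref{lem:limsup}, applied in dimension $d-s$ (in the closure form of the remarks following those statements) forces the corresponding $\limsup$ set to be Lebesgue-null; this disposes of every $\delta\neq0$.

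For $\delta=0$ I would first replace $g_{n,0}$ by the sequence $u_n$ of \eqref{eq:un}, assembled from the uniform limits $\bar N_r^\ell$ of the restricted B-splines and their biorthogonal functions $\bar N_r^{\ell*}$ furnished by Lemma~\ref{lem:limit_dual_bspline}; the difference $g_{n,0}-u_n$ is again controlled by a $b_n$-sum against an $O(\varepsilon)$ measure, hence has null $\limsup$, so $g_{n,0}$ and $u_n$ have the same a.e. limit on $F$. Then, exactly as in Parts~II and~III of the proof of Theorem~\ref{thm:conv}, for each $i_1$ one constructs a bounded-variation vector measure $\nu_{i_1}$ on $\mathscr A^{>s}=\cup_n\mathscr F_n^{>s}$ satisfying \eqref{eq:measure_T}, uses the RNP of $X$ to extract the Radon-Nikod\'{y}m density $w_{i_1}$ of the $\lambda^{d-s}$-absolutely continuous part of $\nu_{i_1}$, notes that $P_n^{>s}\nu_{i_1}\to w_{i_1}$ a.e. by Theorem~\ref{thm:aeconv} while the singular part tends to $0$ a.e. by one more application of Lemma~\ref{lem:limsup}, and concludes that $u_n(y_1,y_2)\to\sum_{i_1}\bar N_{i_1}^{\leq s*}(y_1)\,w_{i_1}(y_2)=:g(y_1,y_2)$ a.e. on $F$. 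The estimate $\|g\|_{L^1_X(F)}\leq C\mu(F)$ then follows from the decay bound of Lemma~\ref{lem:limit_dual_bspline} for $\bar N_{i_1}^{\leq s*}$ together with the bound $|\nu_{i_1}|(I^{d-s})\leq 2^{d-s}\mu(\overline{\supp\bar N_{i_1}^{\leq s}}\times Y)$.

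The main obstacle---and the reason this section is lengthy---is the bookkeeping needed to make the restricted spline elements $N_{n,i}I_{F_\delta}$, the limits $\bar N_{i_1}^{\leq s}$, and the measure generators $I_Q\cdot I_Y$ well defined inside the completion $W$ while keeping their supports pinched onto $F_\delta$ up to sets of arbitrarily small $\mu$-mass; this is precisely what Lemmas~\ref{lem:restriction} and~\ref{lem:limit_dual_bspline} and the careful choices of the auxiliary intervals $C$, $O$ and $J$ accomplish, and it is what reduces every error term to a $b_n$-sum against an $O(\varepsilon)$ measure so that Corollary~\ref{cor:limsup} and Lemma~\ref{lem:limsup} can be invoked. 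Once that is in hand, verifying that the rectangles $F$ cover $I^d$ with at most $2^d$-fold overlap and that $\sum_F\mu(F)\leq 2^d\mu(\bar I^d)<\infty$ is routine.
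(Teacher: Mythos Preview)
Your proposal is correct and follows essentially the same route as the paper's Section~\ref{sec:convergence_general}: decompose $I^d$ into the rectangles $F$, split $g_n=\sum_\delta g_{n,\delta}$ via the elements $N_{n,i}I_{F_\delta}$, dispose of $\delta\neq0$ through Corollary~\ref{cor:limsup} and Lemma~\ref{lem:limsup}, and treat $\delta=0$ by passing to $u_n$, building the measures $\nu_{i_1}$, and invoking RNP exactly as in Parts~II--III of Theorem~\ref{thm:conv}. One terminological slip: the frozen coordinates $y_1$ (those with $\ell\leq s$, lying in $V_{j_\ell}^\ell$) are the \emph{non}-dense directions---that is precisely why $\beta=\inf_m|A_m^{\leq s}(y_1)|>0$---and in Case~1 the paper actually freezes only a single such coordinate and works in dimension $d-1$ rather than $d-s$, but this does not affect the argument.
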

\begin{remark}
	Employing the notation developed in this section, we emphasize that the
	pointwise limit $g$ 
	has the explicit representation
\[
g(y_1,y_2) := \sum_{i_1}\bar N_{i_1}^{\leq
		s*}(y_1) w_{i_1}(y_2),\qquad (y_1,y_2)\in F,
\]
where $\bar N_{i_1}^{\leq s*}$ are the functions given by
Lemma~\ref{lem:limit_dual_bspline} corresponding to $F^1\times \cdots \times F^s
= V_{j_1}^1 \times \cdots \times V_{j_s}^s$ and the function $w_{i_1}$ is the
Radon-Nikod\'{y}m density of the absolutely continuous part (w.r.t Lebesgue
measure $\lambda^{d-s}$) of the measure
$A\mapsto T\big(\bar N_{i_1}^{\leq s} \otimes (I_A \cdot I_Y)\big)$
with $Y = (V^{s+1})^c \times \cdots \times (V^d)^c$.
\end{remark}
\subsection*{Acknowledgements } 
The author is  supported by the Austrian Science Fund FWF, project P32342. 

\bibliographystyle{plain}
\bibliography{convergence}

\end{document}